\numberwithin{equation}{section} 
\newtheorem{theorem}{Theorem}[section]
\newtheorem{proposition}[theorem]{Proposition}
\newtheorem{lemma}[theorem]{Lemma}
\theoremstyle{remark}
\newtheorem{remark}[theorem]{Remark}
\theoremstyle{definition}
\newtheorem{example}[theorem]{Example}
\newcommand{\R}{\mathbb{R}}
\newcommand{\ba}{\begin{array}}
\newcommand{\ea}{\end{array}}
\newcommand{\bthm}{\begin{theorem}}
\newcommand{\ethm}{\end{theorem}}
\newcommand{\bprop}{\begin{proposition}}
\newcommand{\eprop}{\end{proposition}}
\newcommand{\blemma}{\begin{lemma}}
\newcommand{\elemma}{\end{lemma}}
\newcommand{\bexmpl}{\begin{example}}
\newcommand{\eexmpl}{\end{example}}
\newcommand{\beqn}{\begin{equation}}
\newcommand{\eeqn}{\end{equation}}
\newcommand{\beqns}{\begin{equation*}}
\newcommand{\eeqns}{\end{equation*}}
\newcommand{\supp}{\operatorname{supp}}
\newcommand{\pr}{\prime}
\newcommand{\pt}{\partial}
\newcommand{\arrow}{\rightarrow}
\newcommand{\Rn}{\mathbb{R}^n}
\newcommand{\ol}{\overline}
\newcommand{\Hn}{\mathcal{H}^{n-1}}
\newcommand{\Prb}{\mathscr{P}}
\renewcommand{\leq}{\leqslant}
\renewcommand{\geq}{\geqslant}
\newcommand{\Ttwo}{{\mathbb{T}^2}}
\newcommand{\Tn}{{\mathbb{T}^n}}
\newcommand{\clB}{\ol{B}_R(x_0)}
\newcommand{\B}{B_R(x_0)}
\definecolor{mygreen}{rgb}{0.1,0.75,0.2}
\newcommand{\eps}{\epsilon}
\newcommand{\E}{\mathsf{E}}
\newcommand{\Om}{\Omega}
\newcommand{\x}{\mathbf{x}}
\DeclareMathOperator{\inte}{int}
\DeclareMathOperator{\Per}{Per}
\DeclareMathOperator{\dive}{div}
\DeclareMathOperator*{\dist}{dist}
\title[Sharp Interface of Nanoparticle-Polymer Blend Model]{Sharp Interface Limit of an Energy 
Modelling Nanoparticle-Polymer Blends}
\author{Stan Alama}
\address{Department of Mathematics and Statistics, McMaster University, Hamilton, ON L8S 4K1, Canada}
\email{alama@mcmaster.ca}
\author{Lia Bronsard}
\address{Department of Mathematics and Statistics, McMaster University, Hamilton, ON L8S 4K1, Canada}
\email{bronsard@mcmaster.ca}
\author{Ihsan Topaloglu}
\address{Department of Mathematics and Statistics, McMaster University, Hamilton, ON L8S 4K1, Canada}
\email{itopalog@math.mcmaster.ca}
\date{\today}                                        
\subjclass{35R35, 49Q20, 74N15, 82B26, 82D60}
\keywords{nanoparticles, isoperimetric problem, $\Gamma$-convergence, block copolymers, self-assembly, phase separation}
\begin{document}

\begin{abstract}
We identify the $\Gamma$-limit of a nanoparticle-polymer model as the number of particles goes to infinity and as the size of the particles and the phase transition thickness of the polymer phases approach zero. The limiting energy consists of two terms: the perimeter of the interface separating the phases and a penalization term related to the density distribution of the infinitely many small nanoparticles. We prove that local minimizers of the limiting energy admit regular phase boundaries and derive necessary conditions of local minimality via the first variation. Finally we discuss possible critical and minimizing patterns in two dimensions and how these patterns vary from global minimizers of the purely local isoperimetric problem. 
\end{abstract}
\maketitle

\section{Introduction}\label{sec:intro}

In many applications, engineering a self-regulating, stable structure with predetermined physical properties is highly desirable. Here we consider the case of block copolymers, for which a composite is created by adding solid ``filler'' nanoparticles in a blend of macromolecules to create high-performance polymers that are used in, for example, solid-state rechargeable batteries, photonic band gap devices, etc. (cf. \cite{Balazs_et_al} and references therein). Depending on the desired physical properties, block copolymers can be used to direct assembly of nanoparticles or, vice-versa, nanoparticles can be placed in the polymeric matrix to alter the morphology of block copolymer microdomains in both the strong and intermediate segregation limits (see e.g. \cite{Kim_et_al,Lee_et_al_2002,Lin_et_al,Thompson_et_al}). Nanoparticles are also used in altering morphologies of immiscible mixtures such as oil/water mixtures or fluid-bicontinuous gels with a microreaction medium and in controlling domains of coarsening (cf. \cite{Cui_et_al,Ginzburg_et_al_1999,Lin_et_al,Stratford_et_al}).

In this paper we consider an Ohta--Kawasaki-type model for a nanoparticle-block copolymer composite and study its limit as the interfacial length scale becomes small, within the framework of $\Gamma$-convergence.  The limiting sharp-interface model combines three effects via interfacial, repulsive nonlocal and bulk terms. 
As a first step, in this paper we concentrate on two of these and provide a more detailed analysis of a sharp-interface energy consisting only of the interfacial and bulk terms.  Clearly, retaining the repulsive interaction term would enrich the energy landscape of the problem and admit a wider variety of potential patterns for minimizers; however, by excluding nonlocal effects we may focus on the confining effect of the nanoparticles which already dramatically alters the morphology of the energy minimizers for the isoperimetric problem. We illustrate this with a specific example in two dimensions, in which the presence of nanoparticles influences the minimizing configuration to switch from a lamellar to a circular interface.

\subsection{The Limiting Problem}\label{subsec:problem} 
We assume that the nanoparticles have a given, fixed distribution, described by an absolutely continuous probability measure $\mu\in\Prb_{\text{ac}}(\Tn)$.  The sharp interface limit is to minimize the energy
	\beqn\label{eqn:limit_energy}
		\E_{\mu,\sigma}(u):= \frac{1}{2}\int_{\Tn} |\nabla u| + \sigma \int_{\Tn} (u(x)-1)^2\,d\mu(x).
	\eeqn
over functions $u\in BV(\Tn;\{\pm 1\})$ satisfying a mass constraint
	\beqn\label{eqn:mass_const}
		\int_{\Tn} u(x)\,dx = m
	\eeqn
where $m\in(-1,1)$ and $\sigma>0$ are constants and $\Tn$ is the $n$-dimensional flat torus. Here $\int_{\Tn}|\nabla u|$ denotes the \emph{total variation} of the function $u\in BV(\Tn;\{\pm 1\})$ and is defined as 
	\[
		\int_{\Tn}|\nabla u| := \sup \left\{\int_{\Tn} u \dive \varphi\,dx\colon \varphi\in C_0^1(\Tn;\Rn),\,|\varphi(x)|\leq 1 \right\}.
	\]
Hence, the first term in the energy calculates the perimeter of the interface between the phases $1$ and $-1$ whereas the second term penalizes the phase $u=-1$ according to the given probability measure $\mu$. The strength of this penalization is controlled by the parameter $\sigma$. The energy $\E_{\mu,\sigma}$ arises as the singular limit of a sequence of energies that appear in models of self-assembly of nanoparticle-polymer blends and are given by the standard Cahn-Hilliard energy with an inhomogeneous term. This limiting energy can be considered as the extension of the classical isoperimetric problem to an inhomogeneous medium. It also gives a simple model of understanding how penalization of one phase via a probability density affects the geometry of the phase boundary. Indeed, depending on the choice of the measure $\mu$ (and the mass $m$,) the penalization term here can act as an \emph{attraction} or \emph{repulsion} between associated phases. Minimization of energies similar to $\E_{\mu,\sigma}$ where the competition between the interfacial and bulk energetic terms drives a pattern formation has attracted much interest in the past. In particular, energies with a penalization term in this spirit appear in modelling image processing problems such as image segmentation, inpainting and denoising (cf. \cite{ChFoZw2014,GiOs2008,JuKaSh2007,KaMa2013}). This energy also has much in common with the problem of minimizing perimeter in the presence of an obstacle constraint (cf. \cite{BaTa88,BaMa82,BrKi74,Ki73,Gi73}).

\subsection{The Diffuse Interface Model}\label{subsec:model}
To model the nanoparticle-block copolymer configurations with $N$ nanoparticles where each particle is of the form of an $n$-dimensional ball of radius $r>0$ and center $x_i\in\Tn$ for $i=1,\ldots,N$, in \cite{Ginz1,Ginz2} the authors consider a free energy which extends the Ohta-Kawasaki model \cite{OK} with an addition of a penalization term. In a dynamical model, one would expect the nanoparticles to be mobile as they interact with the diblock copolymers. Here we assume that the nanoparticle dynamics is at a significantly slower time scale; hence, we fix their location, and treat them as confining or pinning elements of polymer chains. Neglecting the mobility of nanoparticles is reasonble since we would like to observe the change in the polymer morphology in terms of the minimization of the free energy with respect to the phase parameter. To be precise, in its most general form we consider the free energy
	\beqn\label{eqn:gen_energy}
		\begin{aligned}
			\E_{\eps,\gamma,\eta,m,r,u_p,N}(u;\x)&:= \frac{3\eps}{8}\int_{\Tn}|\nabla u|^2\,dx + \frac{3}{16\eps}\int_{\Tn} (u^2-1)^2\,dx \\
																						&\qquad\qquad+ \frac{\gamma}{2}\int_{\Tn}\!\int_{\Tn} G(x,y)(u(x)-m)(u(y)-m)\,dxdy \\
																						&\qquad\qquad\qquad+ \eta \int_{\Tn} \sum_{i=1}^N V(|x-x_i|)(u-u_p)^2\,dx.
		\end{aligned}
	\eeqn
Here $\Tn$ denotes the $n$-dimensional flat torus, $\x=(x_1,\ldots,x_N)\in\Tn\times\cdots\times\Tn$ denotes the vector consisting of the centers of nanoparticles and $u\in H^1(\Tn)$ is the phase parameter. Moreover, as above $m=\int_{\Tn}u(x)\,dx$, and $\eps>0$, $\gamma>0$, $\eta>0$, $r>0$ and $u_p\in[-1,1]$ are constants. Clearly $m\in(-1,1)$ is related to the volume fraction of the diblock copolymers describing the distribution of different types polymers with $m=0$ corresponding to equal mass distribution between two types of polymers. As it is standard in Cahn-Hilliard-type energies, $\eps>0$ describes the thickness of the transition layer between $u=1$ and $u=-1$.  (The extra factor of $\frac34$ will simplify the form of the eventual Gamma limit, and is inconsequential.)  The parameter $\gamma>0$ is related to the strength of the chemical bond between the polymers subchains in the copolymer macromolecule and controls the long-range interaction between the phases via the Green's function of the flat $n$-torus denoted by $G(x,y)$. The parameter $u_p\in[-1,1]$ determines the ``wetting'' of nanoparticles and their preference towards the polymer phases. For example, $u_p=1$ means that the particles prefer to stick to those subchains of a diblock copolymer macromolecule that are given by the phase $u=1$. Finally $V$ denotes a rapidly decreasing repulsive potential. In \cite{Ginz1,Ginz2}, the authors take $V(|x|)=\exp(-|x|/r_0)$ with $r_0 \ll 1$ so that the repulsion is short-ranged; however, we restrict $V$ to be a smooth and radial function of support $B(0,r)$ so that the interaction between particles is zero. Hence, the last term of the energy \eqref{eqn:gen_energy} simplifies to
	\[
		\eta\,\sum_{i=1}^N \int_{B(x_i,r)} V(|x-x_i|)(u(x)-u_p)^2\,dx.
	\]

\smallskip

\begin{remark}(The potential $V$)
The reason for the inclusion of a potential $V$ as a weight in the penalization term is related to the dynamics of the model. Indeed, in their model the authors consider the evolution of a system of nanoparticle-block copolymer blend as a gradient flow of the free energy given by \eqref{eqn:gen_energy} along with a system of ordinary differential equations for evolution of the centers of nanoparticles. There a nonzero potential $V$ enables the nanoparticles to move around in the domain $\Tn$ whereas its short-range allows the authors to neglect the interaction between nanoparticles.
\end{remark}

Even in the absence of nanoparticles (i.e., when $\eta=0$) the microphase separation of diblock copolymers yields a rather rich and complex picture. There is an extensive literature on the mathematical analysis of phase separation of block copolymers via the Ohta-Kawasaki model and its sharp interface limit leading to a \emph{nonlocal isoperimetric problem}. From mathematical derivation of the model \cite{C2001,CR2005} to analysis on curved manifolds \cite{To2013,ChToTs15}, the energy landscape of \eqref{eqn:gen_energy} with $\eta=0$ and its $\Gamma$-limit as $\eps\to 0$ whether posed on the flat torus (i.e. with periodic boundary conditions), on a general domain with homogeneous Neumann data or on the whole Euclidean space has been rigorously investigated in various parameter regimes of $m$ and $\gamma$ (cf. \cite{AcFuMo13,AlChOt2009,BoCr14,ChPe2010,ChPe2011,GMS2013,GMS2014,KnMu2013,KnMu2014,LO2014,Mu2002,Mu2010,MoSt2014,RW2000,RW2009,RW2014,ST}). However, to our knowledge, mathematical analysis of nanoparticle-block copolymer blends via the energy \eqref{eqn:gen_energy} or its sharp interface version \eqref{eqn:limit_energy} has not been carried out.

\smallskip

\subsection{Choices of parameters}\label{subsec:ch_par}
We concentrate only on the local interactions between the phases, i.e., we choose $\gamma=0$.  This simplification does not affect the passage to the $\Gamma$-limit in the full energy functional, as the nonlocal interaction term is a continuous perturbation of the other terms, but (as explained above) in this paper we restrict our attention to the competition between the interfacial and bulk confinement terms only.
 In the absence of nanoparticles ($\eta=0$) the periodic phase separation and the relation between the periodic Cahn-Hilliard energy and the periodic isoperimetric problem have been investigated in \cite{ChSt2006}. The addition of nanoparticles into the model, of course, poses new challenges. In the model \eqref{eqn:gen_energy}, we take the weight of the penalization term to be compactly supported in a ball, smooth, radial, repulsive and normalized to have mass 1. Specifically, we choose the function $V:\R\to\R$ so that for $\mathcal{V}(x):=V(|x|)$
\medskip
		\begin{itemize}	
				\addtolength{\itemsep}{6pt}
						\item[(A1)] $\supp \mathcal{V} = B(0,r)$ for some $0<r\ll 1$,
						\item[(A2)] $\mathcal{V}\in C^1(\Tn)$,
						\item[(A3)] $V^\pr(|x|)\leq 0$ for all $x\in\Tn$, and
						\item[(A4)] $\int_{\Tn} \mathcal{V}(x)\,dx=1$.
		\end{itemize}
\medskip

Also, we take
	\[
		\eta := \frac{\sigma}{N}
	\]
for some $\sigma>0$. With these choices the energy is $\mathcal{O}(1)$. Moreover, we assume that $u_p=1$, i.e., that the nanoparticles completely prefer the phase $u=1$.  With the above choices, the free energy \eqref{eqn:gen_energy} of a nanoparticle-polymer blend with $N$-many round particles centered at $\x\in\Tn\times\cdots\times\Tn$ with radius $r>0$ is given by
	\beqn\label{eqn:energy}
		\begin{aligned}
			\E_{\eps,\sigma,r,N}(u;\x) &= \frac{3\eps}{8}\int_{\Tn}|\nabla u|^2\,dx + \frac{3}{16\eps}\int_{\Tn} (u^2-1)^2\,dx \\
														 &\qquad\qquad + \frac{\sigma}{N} \sum_{i=1}^N \int_{B(x_i,r)}  V(|x-x_i|)(u-1)^2\,dx.
		\end{aligned}
	\eeqn
We consider the energy $\E_{\eps,\sigma,r,N}$ over functions $u\in H^1(\Tn)$ with $\int_{\Tn}u(x)\,dx=m$ and a set of fixed points $\x\in (\Tn)^N$ as centers of nanoparticles. Note that the constants in front of the first two terms in \eqref{eqn:energy} are chosen so that these two terms together $\Gamma$-converge to the perimeter of the phase $u=1$, namely to the first term of \eqref{eqn:limit_energy}, as $\eps\to 0$ in the $L^1(\Tn)$-topology (cf. \cite{Stern}).

\subsection{Outline}\label{subsec:outline}
Our main result in Section \ref{sec:infin_part} states that for any absolutely continuous probability measure $\mu\in\Prb_{\text{ac}}(\Tn)$ we can find $N_\eps$-many points so that an appropriate extension of the energy $\E_{\eps,\sigma,r_\eps,N_\eps}$ to $L^1(\Tn)$ $\Gamma$-converges to the energy $\E_{\mu,\sigma}$ given by \eqref{eqn:limit_energy} as the number of particles $N_\eps$ tends to infinity and the radius $r_\eps$ of each particle goes to zero as $\eps\arrow0$ (Proposition~\ref{prop:gamma_conv}). To prove this result we exploit the above mentioned $\Gamma$-convergence of the periodic Cahn-Hilliard energy to the isoperimetric energy as the interfacial thickness $\eps$ goes to zero (cf. \cite{ChSt2006,Stern}) while approximating the measure $\mu$ by measures where the density is given as the weight in the penalization term of $\E_{\eps,\sigma,r_\eps,N_\eps}$. A classical consequence of $\Gamma$-convergence is that a sequence of minimizers of the energies $\E_{\eps,\sigma,r_\eps,N_\eps}$ converges to a minimizer of the energy $\E_{\mu,\sigma,r_\eps,N_\eps}$ in $L^1(\Tn)$ as $\eps\to 0$ (Proposition~\ref{prop:min_conv}). Note that the energies $\E_{\mu,\sigma}$ and $\E_{\eps,\sigma}$ admit global minimizers by the direct method of the calculus of variations for any $\mu\in\Prb_{\text{ac}}(\Tn)$, $\sigma>0$ and $\eps>0$. This $\Gamma$-convergence result also extends to the copolymer model with the inclusion of the nonlocal repulsive interaction term (Remark~\ref{rem:nonloc_pert}).	

In Section \ref{sec:properties}, we state regularity and criticality properties of local minimizers of the energy $\E_{\mu,\sigma}$. Indeed, we prove that the phase boundaries of $L^1$-local minimizers of $\E_{\mu,\sigma}$ are regular provided the density of the measure $\mu$ is in $L^\infty$ (Proposition~\ref{prop:reg}). Moreover, under additional smoothness assumptions on the density (namely when the density is $C^1$) we present the first variation of the energy $\E_{\mu,\sigma}$ giving a necessary condition of criticality (Proposition~\ref{prop:firstsecondvar}).

The periodic isoperimetric problem has been the focus of much attention in the past (see e.g. \cite{G-B1996,HPRR2004,HoHuMo1999,MoRo2010} and references therein), and it is well-known that solutions of the isoperimetric problem possess phase boundaries of constant mean curvature. This, of course, may not be the case for the minimizers of the energy $\E_{\mu,\sigma}$. However, exploiting the regularity and criticality results of Section \ref{sec:properties}, in Section \ref{sec:T2} we provide an example in two dimensions to illustrate how the minimizing pattern may be affected by the presence of the  penalization measure $\mu$ (Example~\ref{exmpl:diskpen}).  Chosing $d\mu = {1\over \pi r^2}\chi_{B(0,r)}\, dx$ with appropriately chosen radius $r$ (compared to the mass constraint $m$), we show that the lamellar pattern, which is the global minimizer of the classical isoperimetric problem \cite{HoHuMo1999}, ceases to be a minimizer of energy $\E_{\mu,\sigma}$ for any $\sigma>0$ with this choice of penalization.  Moreover we discuss possible global minimizers of $\E_{\mu,\sigma}$ for this particular $\mu$ and prove that for all sufficiently large $\sigma$, the global minimizer of $\E_{\mu,\sigma}$ is given by a disk inside the support of $\mu$ (Propositions~\ref{prop:large} and~\ref{prop:contract}). These discussions also emphasize how the penalization affects the morphology (and geometry) of the phases (and their boundaries) (see Figure \ref{fig:transitioninsigma}).

We conclude (in Section 5) by pointing out several directions for possible future studies.

\begin{figure}[ht!]
     \begin{center}

        \subfigure[{\tiny $\sigma$=0}]{
            \label{fig:frst}
            \includegraphics[width=0.25\linewidth]{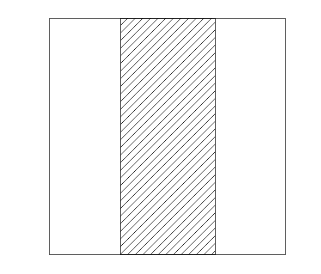}
        }\quad\
        \subfigure[{\tiny $\sigma>0$ small}]{
           \label{fig:scnd}
           \includegraphics[width=0.25\linewidth]{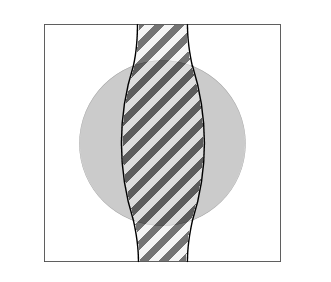}
        }\quad\ 
        \subfigure[{\tiny $\sigma>0$ large}]{
            \label{fig:thrd}
            \includegraphics[width=0.25\linewidth]{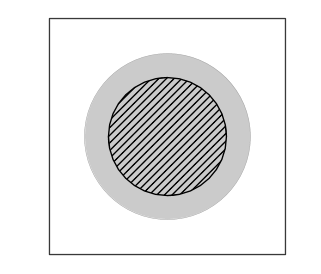}
        }

    \end{center}
    \caption{
        Illustration of transition from a single striped lamellar pattern to a disk as $\sigma$ grows.
        Patterns (a) and (c) are the global minimizers of $\E_{\mu,\sigma}$ for $\sigma=0$ and $\sigma>\sigma_0$, respectively.  Pattern (b) is a likely candidate for a minimizer with $\sigma>0$ but small. The gray disk represents the penalization region $B(0,r)$. 
     }
   \label{fig:transitioninsigma}
\end{figure}

\section{Large Number of Asymptotically Small Particles}\label{sec:infin_part}

In this section we prove that given an absolutely continuous probability measure $\mu\in\Prb_{\text{ac}}(\Tn)$ the energy $\E_{\mu,\sigma}$ appears as the asymptotic limit of the energy $\E_{\eps,\sigma,r,N}$ as the number of particles approaches infinity, and the size of nanoparticles and the thickness of the phase transition go to zero simultaneously. Here the measure $\mu$ gives the probability distribution of infinitely many asymptotically small nanoparticles in the $\eps\to 0$ limit. Indeed, we identify the energy \eqref{eqn:limit_energy} as the $\Gamma$-limit of the energy $\E_{\eps,\sigma,r,N}$ in this asymptotic regime when $r$ and $N$ approach zero and infinity at certain rates, respectively, as functions of $\eps$. To this end, for given $\eps>0$, let $r_\eps=r(\eps)$ and $N_\eps=N(\eps)$ be functions of $\eps$ so that
	\beqn\label{eqn:conv_rates}
		r_\eps\to 0, \qquad N_\eps\to +\infty \quad \text{ and }\quad N_\eps \, r_\eps^n \ll 1
	\eeqn
as $\eps\to 0$.

\smallskip

For fixed $N_\eps$-many points $x_1,\ldots,x_{N_\eps}\in\Tn$ consider the energy $\E_{\eps,\sigma}$ which extends the energy $\E_{\eps,\sigma,r_\eps,N_\eps}$ defined in \eqref{eqn:energy} to $L^1(\Tn)$ as follows
	\beqn\label{eqn:energy_eps}
			\E_{\eps,\sigma}(u):=\begin{cases}
 \frac{3\eps}{8}\int_{\Tn}|\nabla u|^2\,dx + \frac{3}{16\eps}\int_{\Tn} (u^2-1)^2\,dx
 & \text{if } u \in H^1(\Tn)\\
 \qquad + \frac{\sigma}{N_\eps\,r_\eps^n} \sum_{i=1}^{N_\eps} \int_{B(x_i,r_\eps)}  V(|x-x_i|/r_\eps)(u-1)^2\,dx &
   \quad\text{and }   \int_{\Tn} u \,dx = m, \\ \\
+ \infty & \text{otherwise}.
\end{cases}
	\eeqn
Note that here we rescale the weight as $\mathcal{V}_\eps=r_\eps^{-n}V(|x|/r_\eps)$ so that the assumption (A4) is satisfied for all $r_\eps\to 0$ as $\eps\to 0$.

Similarly we will extend the energy $\E_{\mu,\sigma}$, however, with an abuse of notation, we will still denote its extension to $L^1(\Tn)$ by $\E_{\mu,\sigma}$. Namely, for $\mu\in\Prb_{\text{ac}}(\Tn)$ let
	\beqn\label{eqn:limit_energy_ext}
		\E_{\mu,\sigma}(u):=\begin{cases}
 					\frac{1}{2}\int_{\Tn} |\nabla u| + \sigma \int_{\Tn} (u(x)-1)^2\,d\mu(x) &\;\, \text{if } u \in BV(\Tn), u=\pm 1 \text{ a.e.}\\
 &\;\,
   \quad\text{and }   \int_{\Tn} u \,dx = m, \\ \\
+ \infty & \;\, \text{otherwise}.
\end{cases}
	\eeqn
With these definitions we obtain that the family of energies $\E_{\eps,\sigma}$ $\Gamma$-converge to $\E_{\mu,\sigma}$ in the topology of $L^1(\Tn)$ as $\eps\to 0$.
 
\bprop[$\Gamma$-convergence of $\E_{\eps,\sigma}$]\label{prop:gamma_conv}
	Let $\mu\in\Prb_{\text{\emph{ac}}}(\Tn)$ be a probability measure that is absolutely continuous with respect to Lebesgue measure. Let the energies $\E_{\eps,\sigma}$ and $\E_{\mu,\sigma}$ be defined by \eqref{eqn:energy_eps} and \eqref{eqn:limit_energy_ext}. Then there exists $N_\eps$-many points $x_1,\ldots,x_{N_\eps}$ in $\Tn$ such that
	\begin{itemize}
		\item[(i)] \emph{(Lower bound)} for any $u\in L^1(\Tn)$ and for any sequence $\{u_\eps\}_{\eps>0}$ in $L^1(\Tn)$ such that $u_\eps\arrow u$ in $L^1(\Tn)$ as $\eps\arrow 0$, we have
		\[
			\liminf_{\eps\arrow 0} \E_{\eps,\sigma}(u) \geq \E_{\mu,\sigma}(u),
		\]
		and
		\item[(ii)] \emph{(Upper bound)} for any $u\in L^1(\Tn)$ there exists a sequence $\{v_\eps\}_{\eps>0}$ in $L^1(\Tn)$ satisfying
			\[
				v_\eps \arrow u \quad\text{in }L^1(\Tn)
			\]
		and
			\[
				\limsup_{\eps\arrow 0} \E_{\eps,\sigma}(v_\eps) \leq \E_{\mu,\sigma}(u).
			\]
	\end{itemize}
\eprop

\begin{proof} We will prove this proposition in three steps.

\medskip

\noindent \emph{Step 1. (Approximation).} First, we will show that a measure $\mu\in\Prb_{\text{ac}}$ can be approximated in the weak-* topology by a collection of measures distributed on balls of radius $r_\eps>0$ with weight $V$. Similar arguments appear in \cite[Proposition 2.2]{SaSe} and \cite[Lemma 7.5]{JS} for specific rates of convergence of $r_\eps$ and $N_\eps$ depending on the physical model. To begin, for $\mu\in\Prb_{\text{ac}}(\Tn)$ let $\rho\in L^1(\Tn)$ denote its density and suppose that $1/C < \rho(x) < C$ for some $C>0$ and for a.e. $x\in\Tn$. If not, we can consider measures $\mu_k$ defined via densities $\rho_k=(\rho \wedge k)\vee 1/k$ and approximate $\mu_k$ by $\mu_k^\eps$. Taking a diagonal sequence then will yield the result.

Given $\eps>0$, define $k_\eps:=\lfloor \eps^{-1} \rfloor$,  where $\lfloor\cdot\rfloor$ denotes the greatest integer less than its argument, and let $I_{k_\eps}$ denote the index set $\{1,\ldots,2^{nk_\eps}\}$. Define the family of nested cubes $\{Q_i\}_{i\in I_{k_\eps}}$ where 
	\begin{enumerate}[(i)]
		\item $Q_i\subset\Tn$ is a cube of side length $2^{-nk_\eps}$,
		\item $Q_i\cap Q_j=\emptyset$ for $i\neq j$,
		\item $\Tn=\bigcup_{i\in I_{k_\eps}}Q_i$ for all $\eps>0$, and
		\item for $\eps_1<\eps_2$, $I_{k_{\eps_2}}\subset I_{k_{\eps_1}}$ and $\{Q_i\}_{i\in I_{k_{\eps_1}}}\subset\{Q_i\}_{i\in I_{k_{\eps_2}}}$.
	\end{enumerate}
Also, define $d_\eps:=2^{-nk_\eps}$, and along with \eqref{eqn:conv_rates}, suppose that
	\[
		r_\eps \ll N_\eps^{-1/n} \ll d_\eps.
	\]
For each $i \in I_{k_\eps}$, let $N_i^\eps=\lfloor N_\eps\mu(Q_i)\rfloor$. In each $Q_i$ select $N_i^\eps$ points $\{x_{ij}^\eps\}_{j=1}^{N_i^\eps}$ that are almost equally distributed such that the distance between them is of order $d_\eps (N_i^\eps)^{-1/n}$. Noting that $C/d_\eps^n\leq\mu(Q_i)\leq Cd_\eps^n$, this implies that for $j\neq k$ we have
	\beqn\label{eqn:diameter}
		|x_{ij}^\eps-x_{ik}^\eps| > C\,N_\eps^{-1/n} \gg C\,r_\eps.
	\eeqn
Also, note that for each $\eps>0$ the total number of points $N_\eps$ is
	\[
		N_\eps = \sum_{i\in I_{k_\eps}} N_i^\eps.
	\]
	
\smallskip

Let $\mu_i^\eps$ be given such that
	\[
		d\mu_i^\eps(x):=\frac{1}{r_\eps^n}\sum_{j=1}^{N_i^\eps} V(|x-x_{ij}^\eps|/r_\eps)\chi_{B(x_{ij}^\eps,r_\eps)}(x)\,dx
	\]
and define
	\beqn\label{eqn:mu_eps}
		\mu_\eps := \frac{1}{N_\eps}\sum_{i\in I_{k_\eps}} \mu_i^\eps.
	\eeqn
Clearly, $\mu_\eps$ is a probability measure, as $\mu_i^\eps(\Tn)=N_i^\eps$ implies that $\mu_\eps(\Tn)=1$. Moreover, by \eqref{eqn:diameter}, for any $i\in I_{k_\eps}$, $B(x_{ij}^\eps,r_\eps)\subset Q_i$ and $B(x_{ij}^\eps,r_\eps)\cap B(x_{ik}^\eps,r_\eps)=\emptyset$ for $j,k=1,\ldots,N_i^\eps$.

To prove that $\mu_\eps\to\mu$ as $\eps\to 0$ in the weak-* topology it suffices to show that 
	\[
		\limsup_{\eps\to 0}\mu_\eps(\Om)\leq \mu(\Om)
	\]
for any closed set $\Om\subset\Tn$. The weak-* convergence then follows from the Portmanteau Theorem (cf. \cite[Theorem 1.3.4]{VaartWellner}).  

For any closed set $\Om\subset\Tn$, by the assumption (A4), we have that
	\beqn
		\begin{aligned}
		\mu_\eps(\Om) &= \frac{1}{N_\eps} \sum_{i\in I_{k_\eps}}\mu_i^\eps(\Om\cap \ol{Q_i}) \\
					  &= \frac{1}{N_\eps\,r_\eps^n}\sum_{i\in I_{k_\eps}}\sum_{j=1}^{N_i^\eps} \int_{B(x_{ij}^\eps,r_\eps)\cap \Om} V(|x-x_{ij}^\eps|/r_\eps)\,dx \\
					  &\leq \frac{1}{N_\eps\,r_\eps^n}\sum_{\substack{i\in I_{k_\eps} \\ \Om\cap\ol{Q_i}\neq\emptyset}}\sum_{j=1}^{N_i^\eps}  \int_{B(x_{ij}^\eps,r_\eps)} V(|x-x_{ij}^\eps|/r_\eps)\,dx \\
					  &= \frac{1}{N_\eps}\sum_{\substack{i\in I_{k_\eps} \\ \Om\cap\ol{Q_i}\neq\emptyset}} N_i^\eps \\
					  &\leq \sum_{\substack{i\in I_{k_\eps} \\ \Om\cap\ol{Q_i}\neq\emptyset}}\mu(Q_i).
		\end{aligned}
		\nonumber
	\eeqn

Let $\delta>0$ be arbitrary. Define
	\[
		\Om_\eps:= \bigcup_{\substack{i\in I_{k_\eps} \\ \Om\cap\ol{Q_i}\neq\emptyset}} Q_i.
	\]
Since the families $\{Q_i\}_{i\in I_{k_\eps}}$ are nested, we have that $\Om_{\eps_1}\subset\Om_{\eps_2}$ for $\eps_1<\eps_2$. Also, since $\Om$ is closed, $\Om=\bigcap_{\eps>0}\Om_\eps$, and for any given $\delta>0$ there exists $\eps_0>0$ such that for all $\eps<\eps_0$,
	\[
		\sum_{\substack{i\in I_{k_\eps} \\ \Om\cap\ol{Q_i}\neq\emptyset}}\mu(Q_i) \leq \mu(\Om) + \delta.
	\]
Therefore
	\[
		\limsup_{\eps\to 0} \mu_\eps(\Om) \leq \mu(\Om) + \delta,
	\]
and letting $\delta\to 0$ yields the result.

\medskip

\noindent \emph{Step 2. (Lower bound).} After relabelling the points $\{x_{ij}^\eps\}$ found in Step 1, for any $\eps>0$ we obtain a set of $N_\eps$ points $x_i,\ldots,x_{N_\eps}\in\Tn$. For any $u\in L^1(\Tn)$ define the energy $\E_{\eps,\sigma}$ given by \eqref{eqn:energy_eps} using the points $\{x_i\}_{i=1}^{N_\eps}$. Let
	\[
	 P_\eps(u) := \frac{3\eps}{8}\int_{\Tn}|\nabla u|^2\,dx + \frac{3}{16\eps}\int_{\Tn} (u^2-1)^2\,dx,
	\]
and
	\[
		K_\eps(u) := \frac{1}{N_\eps\,r_\eps^n} \sum_{i=1}^{N_\eps} \int_{B(x_i,r_\eps)}  V(|x-x_i|/r_\eps)(u-1)^2\,dx
	\]
so that $\E_{\eps,\sigma}(u)=P_\eps(u)+\sigma K_\eps(u)$.

Let $u\in L^1(\Tn)$ and let $\{u_\eps\}_{\eps>0}\subset L^1(\Tn)$ be a sequence such that $u_\eps\arrow u$ in $L^1(\Tn)$ as $\eps\to 0$. We can assume, without loss of generality, that $u=\pm 1$ a.e. Otherwise
	\[
		\liminf_{\eps\to 0} \E_{\eps,\sigma}(u_\eps) \geq \frac{3}{16\eps}\int_{\Tn} (u_\eps^2-1)^2\,dx=+\infty,
	\]
and the result of Part (i) follows trivially. Similarly, if $\int_{\Tn}u\,dx\neq m$, then for small $\eps>0$, $\int_{\Tn} u_\eps\,dx\neq m$ and $\liminf_{\eps\to 0} \E_{\eps,\sigma}(u_\eps)=+\infty$. Therefore it suffices to consider only functions $u\in L^1(\Tn)$ satisfying $u=\pm 1$ a.e. and $\int_{\Tn} u\,dx =m$.

Moreover, assume that $-1\leq u_\eps \leq 1$ a.e. If not, we can consider the truncated functions
	\beqn
			\ol{u}_\eps = \begin{cases} -1 &\mbox{ on }\quad \{x\colon u_\eps(x)<-1\} \\
																	u_\eps &\mbox{ on }\quad \{x\colon -1\leq u_\eps(x) \leq 1\} \\
																	1 &\mbox{ on }\quad \{x\colon u_\eps(x)>1\}.\end{cases}
		\nonumber
	\eeqn
Clearly $\ol{u}_\eps \arrow u$ in $L^1(\Tn)$ as $\eps\arrow 0$. Also, $P_\eps(u_\eps)\geq P_\eps(\ol{u}_\eps)$ and $K_\eps(u_\eps)\geq K_\eps(\ol{u}_\eps)$. Hence, we can use the truncated functions $\ol{u}_\eps$ instead of $u_\eps$.

\smallskip

In \cite[Section B]{Stern}, the author shows that $\liminf_{\eps\arrow 0}P_\eps(u_\eps) \geq P(u)$. Now we will show that a similar lower semi-continuity property also holds for the penalization term $K_\eps$. Note that, by \eqref{eqn:mu_eps} we can write
	\[
		K_\eps(u) = \int_{\Tn} (u_\eps-1)^2\,d\mu_\eps(x).
	\]
Also note that since $u_\eps\arrow u$ in $L^1(\Tn)$ and $-1\leq u_\eps \leq 1$ we have that $u_\eps\arrow u$ in any $L^p(\Tn)$ with $p\geq1$. In particular, $(u_\eps-1)^2\arrow(u-1)^2$ a.e. as $\eps\arrow 0$. Then by the Egoroff's Theorem, for any arbitrary $\delta>0$ there exists a compact set $G_\delta\subset\Tn$ such that $(u_\eps-1)^2\arrow(u-1)^2$ uniformly on $G_\delta$ and $|\Tn\setminus G_\delta|<\delta$. Moreover, there exists $\eps_0>0$ such that for any $\eps<\eps_0$
	\[
		\Big|(u_\eps-1)^2-(u-1)^2\Big| < \frac{\delta}{|G_\delta|}
	\]
for all $x\in G_\delta$.

On the other hand, since $u=\pm 1$ a.e., $(u-1)^2=4\chi_{A^c}$ where $A=\{x\in\Tn\colon u(x)=1\}$ and $A^c$ denotes the complement of $A$, namely, $\Tn\setminus A$. Since the set $A$ has finite perimeter it holds that $|\pt A|=0$, and since $\mu$ is absolutely continuous with respect to the Lebesgue measure we get that $\mu(\pt A)=0$. Thus, $A$ (and $A^c$), is a continuity set of the measure $\mu$. Since $\mu_\eps\to\mu$ in the weak-* topology, this implies via the Portmanteau Theorem (again, cf. \cite[Theorem 1.3.4]{VaartWellner}) that
	\[
		\lim_{\eps\to 0} \mu_\eps(A^c) = \mu(A^c).
	\]  
Combining these, we get that
	\beqn
		\begin{aligned}
			\liminf_{\eps\arrow 0} \int_{\Tn} (u_\eps-1)^2\,d\mu_\eps(x) &\geq \liminf_{\eps\arrow 0}\int_{G_\delta} (u_\eps-1)^2\,d\mu_\eps(x) \\
&= \liminf_{\eps\to 0} \int_{G_\delta}\Big((u_\eps-1)^2-(u-1)^2+(u-1)^2\Big)\,d\mu_\eps(x) \\
&\geq \liminf_{\eps\to 0} \int_{G_\delta} (u-1)^2\,d\mu_\eps(x) - \delta \\
&=\int_{G_\delta} (u-1)^2\,d\mu(x) - \delta \\
&\geq \int_{\Tn} (u-1)^2\,d\mu(x) - C\delta
		\end{aligned}
		\nonumber
	\eeqn
for some constant $C>0$ independent of $\eps>0$. Therefore letting $\delta\arrow 0$ yields
	\[
		\liminf_{\eps\arrow 0} K_\eps(u_\eps) \geq K(u);
	\]
hence, Part (i) follows.

\medskip

\noindent \emph{Step 3. (Upper bound).} Let $u\in L^1(\Tn)$ and let the points $\{x_i\}_{i=1}^{N_\eps}\subset\Tn$ be given as above. Assume that $u\in BV(\Tn)$, $u=\pm 1$ a.e. and $\int_{\Tn} u(x)\,dx=m$. Otherwise, $\E_{\mu,\sigma}(u)=\infty$ and by choosing $v_\eps=u$ for all $\eps>0$ the result of Part (ii) follows. Let the set $A\subset\Tn$ be such that
	\[
	 u(x) = \begin{cases} 1 &\mbox{ if }\,x\in A \\
	 										 -1 &\mbox{ if }\,x\in A^c.
	 				\end{cases}
	\]
Let $\Gamma=\pt A \cap \pt A^c$ and assume that $\Gamma\in C^2$. If not, one can approximate $A$ by a sequence of open sets $\{A_k\}_{k\in\mathbb{N}}$ as in \cite[Lemma 1]{Stern} satisfying the condition that $\pt A_k$ is of class $C^2$. Then one can use the sets $\pt A_k\cap\pt A_k^c$ instead of $\Gamma$ to prove Part (ii) and then pass to a limit using a diagonal sequence.

Define the signed distance function $d_\Gamma:\Tn\arrow \R$ by
	  \[
	  	d_\Gamma (x) := \begin{cases} \dist(x,\Gamma) &\mbox{ if }\,x\in A^c \\
	 										 -\dist(x,\Gamma) &\mbox{ if }\,x\in A,
	 				            \end{cases}
	  \]
and the sequence of functions $g_\eps:\R\arrow\R$ by
		\[
			g_\eps(t):= \begin{cases}
											-1                                                        &\mbox{ if } t>2\sqrt{\eps} \\
											\frac{-1-z(1/\sqrt{\eps})}{\sqrt{\eps}}(t-2\sqrt{\eps})-1 &\mbox{ if } \sqrt{\eps}\leq t \leq 2\sqrt{\eps} \\
											z(t/\eps) 																                &\mbox{ if }|t|\leq \sqrt{\eps} \\
											\frac{z(-1/\sqrt{\eps})-1}{\sqrt{\eps}}(t+2\sqrt{\eps})+1	&\mbox{ if } -2\sqrt{\eps}\leq t \leq -\sqrt{\eps} \\
											1																					&\mbox{ if } s<-2\sqrt{\eps}
									\end{cases}
		\]
where the function $z(t)$ solves the ordinary differential equation
		\[
			\frac{dz}{dt}=\frac{\sqrt{3}}{4}(z^2-1) \quad\text{ subject to }z(0)=0.
		\]
In \cite{Stern} the author shows that the function defined by
		\beqn\label{eqn:v_eps}
			v_\eps(x) := g_\eps(d_\Gamma(x)) + \eta_\eps,
		\eeqn
where $\eta_\eps$ is an additive constant that is of order $\mathcal{O}(\eps)$, is in $H^1(\Tn)$, and satisfies the mass constraint $\int_{\Tn} v_\eps(x)\,dx=m$ for all $\eps>0$. Moreover,
	\[
		v_\eps\arrow u\text{ in } L^1(\Tn)\text{ as }\eps\to 0\quad\text{ and }\quad\limsup_{\eps\arrow 0} P_\eps(v_\eps) \leq P(u).
	\]
We will use the family of functions $v_\eps$'s to prove that a similar $\limsup$ inequality also holds true for the penalization term $K_\eps$. To this end let
	\[
		A_+^\eps:=\{x\in\Tn\colon g_\eps(d_\Gamma(x))=1\}\quad\text{ and }\quad A_-^\eps:=\{x\in\Tn\colon g_\eps(d_\Gamma(x))=-1\},
	\]
and let $\Gamma_\eps$ denote the transition layer
	\[
	  \Gamma_\eps:= \{x\in\Tn\colon -1<g_\eps(d_\Gamma(x))<1\}
	\]
Define
	\[
		\Gamma_+^\eps:=\Gamma_\eps \cap A\quad\text{ and }\quad\Gamma_-^\eps:=\Gamma_\eps \cap A^c.
	\]
Then, for any $\eps>0$, $\Tn = A_+^\eps \cup \Gamma_+^\eps \cup \Gamma_-^\eps \cup A_-^\eps$, $A=\Gamma_+^\eps \cup A_+^\eps$ and $A^c = \Gamma_-^\eps \cup A_-^\eps$.

Using the fact that $\eta_\eps=\mathcal{O}(\eps)$ and $g_\eps(d_\Gamma(\cdot))\in L^\infty(\Tn)$, for the functions $v_\eps$ given by \eqref{eqn:v_eps} we get that
	\beqn
		\begin{aligned}
			K_\eps(v_\eps)&=\int_{\Tn} (v_\eps-1)^2\,d\mu_\eps(x) \\
								    &= \int_{\Gamma_+^\eps} \Big(g_\eps(d_\Gamma(x))-1\Big)^2\,d\mu_\eps(x) + \int_{\Gamma_-^\eps} \Big(g_\eps(d_\Gamma(x))-1\Big)^2\,d\mu_\eps(x) + 4\int_{A_-^\eps}d\mu_\eps(x) + \mathcal{O}(\eps) \\
								    &= \int_{\Gamma_+^\eps} \Big(g_\eps(d_\Gamma(x))-1\Big)^2\,d\mu_\eps(x) + \int_{\Gamma_-^\eps} \Big(\big(g_\eps(d_\Gamma(x))-1\big)^2-4\Big)\,d\mu_\eps(x) + 4\int_{A^c}d\mu_\eps(x) + \mathcal{O}(\eps)\\
								    &\leq 4\mu_\eps(\ol{\Gamma_+^\eps}) + 4\int_{A^c}d\mu_\eps(x) + \mathcal{O}(\eps).
		\end{aligned}
		\nonumber
	\eeqn
Let $\delta>\eps>0$ be fixed but arbitrary. Then $\Gamma_{\pm}^\eps \subset \Gamma_{\pm}^\delta$, and for $\Gamma_\delta=\Gamma_+^\delta \cup \Gamma_-^\delta$ we have
	\[
		K_\eps(v_\eps) \leq 4\mu_\eps(\ol{\Gamma_\delta}) + 4\int_{A^c}d\mu_\eps(x) + \mathcal{O}(\eps).
	\]
Again, by the Portmanteau Theorem the weak-* convergence of $\mu_\eps$ to $\mu$ is equivalent to the fact that $\limsup_{\eps\to 0}\mu_\eps(\ol{\Gamma_\delta})\leq \mu(\ol{\Gamma_\delta})$ as $\ol{\Gamma_\delta}$ is closed in $\Tn$. Moreover, since $|\pt A|=0$ and $A$ is a continuity set for the measure $\mu$ by its absolute continuity with respect to the Lebesgue measure, as in Step 2, we have that $\lim_{\eps\to 0} \mu_\eps(A^c) = \mu(A^c)$. Therefore,
	\[
		\limsup_{\eps \arrow 0} K_\eps(v_\eps) \leq 4\mu(\ol{\Gamma_\delta}) + 4\int_{A^c}d\mu(x) \leq C\delta + K(u)
	\]
for some constant $C>0$ independent of $\eps>0$. Letting $\delta\arrow 0$ and combining this with $\limsup_{\eps\arrow 0}P_\eps(v_\eps)\leq P(u)$ we obtain the result of Part (ii).
\end{proof}

\medskip

\begin{remark}(Nonlocal perturbations and the diblock copolymer model)\label{rem:nonloc_pert}
Define the nonlocal perturbations of the functionals $\E_{\eps,\sigma}$ and $\E_{\mu,\sigma}$ respectively by
	\beqn\label{eqn:energy_gamma}
			\E_{\eps,\sigma,\gamma}(u):=\begin{cases}
 \frac{3\eps}{8}\int_{\Tn}|\nabla u|^2\,dx + \frac{3}{16\eps}\int_{\Tn} (u^2-1)^2\,dx
 & \text{if } u \in H^1(\Tn)\\
 \qquad  + \frac{\sigma}{N_\eps\,r_\eps^n} \sum_{i=1}^{N_\eps} \int_{B(x_i,r_\eps)}  V(|x-x_i|/r_\eps)(u-1)^2\,dx  & \quad\text{and }   \int_{\Tn} u \,dx = m,  \\
  \qquad\qquad + \gamma \int_{\Tn}\!\int_{\Tn} G(x,y)\big(u(x)-m\big)\big(u(y)-m\big)\,dxdy & \\ \\
+ \infty &  \text{otherwise},
\end{cases}
	\eeqn
and
	\beqn\label{eqn:limit_energy_gamma}
		\E_{\mu,\sigma,\gamma}(u):=\begin{cases}
 					\frac{1}{2}\int_{\Tn} |\nabla u| + \sigma \int_{\Tn} (u(x)-1)^2\,d\mu(x) & \text{if } u \in BV(\Tn), u=\pm 1 \text{ a.e.}\\
 \qquad +\gamma \int_{\Tn}\!\int_{\Tn} G(x,y)\big(u(x)-m\big)\big(u(y)-m\big)\,dxdy & \quad\text{and }   \int_{\Tn} u \,dx = m, \\ \\
+ \infty &  \text{otherwise}
\end{cases}
	\eeqn
over functions $u\in L^1(\Tn)$. Then a standard conclusion of $\Gamma$-convergence is that the convergence is stable under continuous perturbations, i.e., $\E_{\eps,\sigma,\gamma}$ $\Gamma$-converges to $\E_{\mu,\sigma,\gamma}$ as $\eps\to 0$ in the $L^1(\Tn)$-topology (cf. \cite[Proposition 6.21]{DalMaso}).
\end{remark}

\bigskip

Another classical consequence of $\Gamma$-convergence is that the limit of a convergent sequence of energy minimizers minimizes the limiting energy. The proof of the following proposition is quite standard and can be adapted, for example, from the proof of \cite[Theorem 1]{Stern}.
 
\bprop[Limit of a sequence of minimizers]\label{prop:min_conv}
	Let $\mu\in\Prb_{\text{\emph{ac}}}(\Tn)$ and let $x_1,\ldots,x_{N_\eps}$ be such that the sequence of measures $\{\mu_\eps\}_{\eps>0}\subset\Prb_{\text{\emph{ac}}}(\Tn)$ defined via the densities
		\beqn\label{eqn:mu_eps_dens}
			\frac{1}{N_\eps\,r_\eps^n}\sum_{i=1}^{N_\eps} V(|x-x_i|/r_\eps)\chi_{B(x_i,r_\eps)}(x)
		\eeqn
converges to $\mu$ in the weak-* topology of $\Prb(\Tn)$ as $\eps\to 0$ and $N_\eps\to+\infty$. Suppose $u_\eps\arrow u$ in $L^1(\Tn)$ as $\eps\to 0$ where, for any $\eps>0$, $u_\eps$ minimizes the energy $\E_{\eps,\sigma}$ for any $\sigma>0$. Then $u$ minimizes the energy $\E_{\mu,\sigma}$ over $BV(\Tn)$ with $u=\pm 1$ a.e. and $\int_{\Tn} u\,dx=m$. 
\eprop

\begin{remark}
Note that given any measure $\mu\in\Prb_{\text{ac}}$ we can find $N_\eps$-many points as in the Step 1 of the proof of Proposition \ref{prop:gamma_conv} so that the probability measures $\mu_\eps$ defined via the densities \eqref{eqn:mu_eps_dens} converge to $\mu$ in the weak-* topology.	
\end{remark}

\medskip

\begin{remark}[Compactness of a sequence of minimizers]\label{rem:comp_min}
Using the polynomial growth of the double-well potential $(u^2-1)^2$ and the compactness of $BV$-functions in $L^1$ (cf. \cite[Theorem 1.19]{Giusti}) we can easily conclude that if $\{u_\eps\}_{\eps>0}$ is a sequence of minimizers of the energies $\E_{\eps,\sigma}$ then there exists a subsequence $\{u_{\eps_j}\}_{j\in\mathbb{N}}$ such that $u_{\eps_k}\to u_0$ in $L^1(\Tn)$ as $\eps_j\to 0$ for some function $u_0\in L^1(\Tn)$ (cf. \cite[Proposition 3]{Stern}).
\end{remark}

\section{Properties of Local Minimizers of $\E_{\mu,\sigma}$}\label{sec:properties}

Independent of its connection to nanoparticle-polymer models, the energy $\E_{\mu,\sigma}$ also piques one's interest as a rather simple extension of the classical periodic isoperimetric problem where one tries to minimize the perimeter of a set of fixed mass with respect to a penalization term determined by a fixed probability measure. Indeed, as a model for pattern formation, the minimization of \eqref{eqn:limit_energy} sets up a basic competition between short-range effects of the perimeter term and possibly long-range penalization via the choice of the measure $\mu$. The interplay between these competing terms appears in properties of local minimizers such as regularity, criticality and stability.

For $\mu\in\Prb_{\text{ac}}(\Tn)$ let us denote its density by $\rho\in L^1(\Tn)$ for the remainder of this section, i.e., let 
	\[
	d\mu(x)=\rho(x)\,dx.
	\]
With a slight abuse of notation we will also denote by $\E_{\mu,\sigma}$ the energy defined on sets of finite perimeter. Namely, we consider the problem
	\beqn\label{eqn:set_energy}
		\text{locally minimize}\quad \E_{\mu,\sigma}(A) := \int_{\Tn} |\nabla \chi_A| + 4\sigma \int_{A^c} \rho(x)\,dx
	\eeqn 
over sets of finite perimeter $A\subset\Tn$ such that $|A|=(1+m)/2$ for any given $m\in(-1,1)$, $\sigma>0$ and $\rho\in L^1(\Tn)$. Note that this formulation is equivalent to minimizing \eqref{eqn:limit_energy} subject to the constraint $\int_{\Tn}u(x)\,dx=m$. Let us also note that a set of finite perimeter $\Om$ is an $L^1$-local minimizer of $\E_{\mu,\sigma}$ if
	\beqn\label{eqn:localmin}
		\E_{\mu,\sigma}(\Om) \leq \E_{\mu,\sigma}(A) \quad\text{provided}\quad \int_{\Tn} |\chi_\Om-\chi_A|\,dx < \delta
	\eeqn
for some $\delta>0$.

As we noted in the introduction the minimization problem \eqref{eqn:set_energy} is in the spirit reminiscent of finding minimal boundaries with respect to an obstacle set. These obstacle problems tackle the following minimization problem:
	\beqn\label{eqn:obst_prob}
		\text{minimize}\quad \int_{\Tn} |\nabla \chi_A|
	\eeqn
over sets of finite perimeter $A\subset\Tn$ such that $L\subset A$ where the obstacle $L$ is a given fixed set of finite perimeter. Note that here the admissible sets are not mass constrained as they are in our problem. We believe that such an isoperimetric obstacle problem is equivalent to \eqref{eqn:set_energy} in the limit $\sigma\to\infty$ when $|A|=|L|$; however, when minimizing $\E_{\mu,\sigma}(A)$ we do not explicitly restrict the admissible patterns $A$ to those which must contain $\supp\rho$.

We note that the regularity of phase boundaries of the obstacle problem \eqref{eqn:obst_prob} were established in \cite[Section 3]{Tam} depending on the boundary regularity of the obstacle set $L$. For our problem \eqref{eqn:set_energy}, on the other hand, we prove that the regularity of the phase boundaries are determined by controlling the $L^\infty$-bound of the density $\rho$ rather than the smoothness of the boundary of its support. The issue here is to control the “excess-like” quantity \eqref{eqn:excess} that measures how far a set is from minimizing perimeter in a ball in terms of the radius of that ball. Indeed, we show that if $\mu$ has \emph{bounded density} $\rho$ then for any $\sigma>0$ the penalization term can locally be controlled by the perimeter term and we can conclude by the well-established regularity theory for the isoperimetric problem that the phase boundary of a local minimizer of $\E_{\mu,\sigma}(A)$ is of class $C^{1,\alpha}$. The essential elements of the regularity result are already contained in the works of others in the similar context; however, we are unaware of a particular result that applies to our setting specifically. Hence, for completeness, we present here a proof for the regularity of phase boundaries.

	\bprop[Regularity of Phase Boundaries]\label{prop:reg}
If $\rho\in L^\infty(\Tn)$ and $\Omega\subset\Tn$ is an $L^1$-local minimizer of \eqref{eqn:set_energy}, then $\partial^*\Omega$ is of class $C^{1,\alpha}$ for some $\alpha\in (0,1)$ and $\mathcal{H}^s(\pt\Omega\setminus\pt^*\Omega)=0$ for every $s>n-8$ where $\pt^* \Omega$ denotes the reduced boundary of $\Omega$ and $\mathcal{H}^s$ denotes the $s$-dimensional Hausdorff measure.
	\eprop
	
The proof of the regularity of local minimizers of $\E_{\mu,\sigma}(A)$ rely on the following technical lemma proof of which can be found in \cite[Lemma 2.1]{G}.

	\blemma\label{lem:reg}
		Let $L$ be a Borel set, and let $D$ be an open domain such that $\int_D |\nabla\chi_L|>0$. Then there exists positive constants $k_0$ and $l_0$ depending only on $D$ and $D\cap L$ such that for all $k$ with $|k|<k_0$ there exists a set $F$ such that $F=L$ outside of $D$ and
			\begin{gather*}
				|F|=|L| + k \\
				\int_D |\nabla\chi_F| \leq \int_D |\nabla\chi_L| + l_0|k|\\
				\int_D |\chi_F-\chi_L|\,dx \leq l_0 |k| \int_D |\nabla\chi_L|.
			\end{gather*}
	\elemma

Now we prove the regularity result proceeding as in \cite[Proposition 2.1]{ST}.	
	
	\begin{proof}[Proof of Proposition \ref{prop:reg}.]
Let $\Omega$ be an $L^1$-local minimizer of \eqref{eqn:set_energy} and $x_0\in\pt\Om$ be arbitrary. Let $D\subset\subset\Tn$ be such that $x_0\not\in\ol{D}$ and $\int_D |\nabla\chi_\Om|>0$. For $L=\Om$ in Lemma \ref{lem:reg} there exist two constants $k_0$ and $l_0$ that depend only on $D$ and $D\cap\Om$. Using these constants fix $R>0$ such that
	\beqn\label{eqn:radius}
  \omega_n R^n<k_0,\quad \left(1+l_0\int_D |\nabla\chi_{\Om}|\right)\omega_n R^n<\delta\quad\mbox{and}\quad \clB\cap\ol{D}=\emptyset,
  \eeqn
where $\omega_n$ is the measure of the unit $n$-ball and $\delta$ is as in \eqref{eqn:localmin}.

Let $\widetilde{\Om}$ minimize the perimeter in $B_R(x_0)$ subject to the boundary values of $\Om$, i.e.,
	\[
		\int_{B_R(x_0)}|\nabla\chi_{\widetilde{\Om}}| \leq \int_{B_R(x_0)}|\nabla\chi_A|	
	\]
for all $A\subset\Tn$ such that $A\setminus B_R(x_0)=\Om\setminus B_R(x_0)$.

Since $\widetilde{\Om}\cap \ol{D}=\Om\cap\ol{D}$ the result of Lemma \ref{lem:reg} holds true with the same constants $k_0$ and $l_0$ if we replace $\Om$ by $\widetilde{\Om}$. Hence, for $k:=|\Om|-|\widetilde{\Om}|\leq \omega_n R^n < k_0$ by the choice of $R>0$, there exists a set $G$ such that $G=\widetilde{\Om}$ outside $D$ and
	\begin{align}
  |G| &=|\Omega|=m, \label{vol}\\
  \int_D |\nabla\chi_G| &\leq \int_D |\nabla \chi_{\widetilde{\Om}}| + C\,R^n,\label{per}\\
  \int_{\Tn} |\chi_G-\chi_\Omega|\,dx&\leq C_0\,R^n<\delta,\label{L1},
 \end{align} 
where \eqref{L1} follows from \eqref{eqn:radius} with $C_0:=\left(1+l_0\int_D |\nabla\chi_{\Omega}|\right)\omega_n$.

By \eqref{vol} and \eqref{L1}, the set $G$ is an admissible competitor for the energy $\E_{\mu,\sigma}(A)$; hence,
	\[
		\int_{\Tn} |\nabla\chi_\Om| + 4 \sigma \int_{\Om^c} \rho(x)\,dx \leq \int_{\Tn} |\nabla \chi_G| + 4\sigma \int_{G^c} \rho(x)\,dx.
	\]
Noting that $\widetilde{\Om}\setminus B_R(x_0)=\Om\setminus B_R(x_0)$ and $G\setminus D = \widetilde{\Om}\setminus D$, and using \eqref{per} we get that
	\beqn
 		\begin{aligned}
  		\int_{\Tn\setminus(D\cup\B)}|\nabla \chi_\Omega| &+ \int_{D}|\nabla \chi_{\widetilde{\Om}}| + \int_{\B}|\nabla \chi_\Omega| + 4\sigma \int_{\Om^c} \rho(x)\,dx \\
       &\leq \int_{\Tn\setminus(D\cup\B)}|\nabla \chi_G| + \int_{D}|\nabla \chi_G| \\
       &\qquad\qquad+ \int_{\B}|\nabla \chi_G| + 4\sigma \int_{G^c} \rho(x)\,dx \\
       &= \int_{\Tn\setminus(D\cup\B)}|\nabla \chi_\Omega| + \int_{D}|\nabla \chi_G|\\
       &\qquad\qquad+\int_{\B}|\nabla \chi_{\widetilde{\Om}}| + 4\sigma \int_{G^c} \rho(x)\,dx \\
       &\leq \int_{\Tn\setminus(D\cup\B)}|\nabla \chi_\Omega|+\int_{D} |\nabla \chi_{\widetilde{\Om}}| \\
       &\qquad\qquad+ \int_{\B}|\nabla \chi_{\widetilde{\Om}}| + 4 \sigma \int_{G^c} \rho(x)\,dx +
       C\,R^n
 		\end{aligned}
 		\nonumber
 	\eeqn
for some constant $C>0$. Hence, we have
	\beqn\label{eqn:firstest}
 		\int_{\B} |\nabla \chi_\Omega| -\int_{\B} |\nabla\chi_{\widetilde{\Om}}|\leq 4\sigma \left(\int_{G^c}\rho(x)\,dx-\int_{\Om^c}\rho(x)\,dx\right)+ C\,R^n.
	\eeqn
	
On the other hand, using \eqref{L1} and the fact that $\rho\in L^\infty(\Tn)$ we obtain
	\beqn\label{eqn:secondest}
		\begin{aligned}
			\int_{G^c}\rho(x)\,dx-\int_{\Om^c}\rho(x)\,dx &= \int_{\Om}\rho(x)\,dx-\int_{G}\rho(x)\,dx \\
																						  &= \int_{\Tn} \rho(x)(\chi_\Om-\chi_G)(x)\,dx \\
																						  &\leq \|\rho\|_{L^\infty(\Tn)} \|\chi_\Om-\chi_G\|_{L^1(\Tn)} \leq C\,R^n.
		\end{aligned}
	\eeqn
Combining \eqref{eqn:firstest} and \eqref{eqn:secondest}, we get that
  \beqn\label{eqn:excess}
 		\int_{\B} |\nabla \chi_\Omega| -\int_{\B} |\nabla \chi_{\widetilde{\Om}}|\leq C\,R^n.
	\eeqn
Property \eqref{eqn:excess} states that the boundary of the set $\Omega$ is \emph{almost area-minimizing} in any ball. With this property, the classical regularity results of \cite{M,Tam} apply, and we can conclude that $\pt^*\Omega$ is of class $C^{1,\alpha}$, with $\mathcal{H}^s(\pt\Omega\setminus\pt^*\Omega)=0$ for every $s>n-8$.
\end{proof}

\bigskip

With the regularity of phase boundaries at hand, under further smoothness assumptions on the density $\rho$ we have the following necessary condition of local minimality.

\bprop[Criticality Condition]\label{prop:firstsecondvar}
	If $\rho\in C^1(\Tn)$ and $u$ is an $L^1$-local minimizer of the energy $\E_{\mu,\sigma}$, then
		\beqn\label{eqn:firstvar}
			(n-1)H(x) - 4\sigma\rho(x) = \lambda \quad\text{for all }x\in\pt A
		\eeqn
for some constant $\lambda$ where $H:\pt A\arrow\R$ denotes the mean curvature of $\pt A$ and $A=\{x\colon u(x)=1\}$ as before.
\eprop

\begin{proof}
Suppose $\rho\in C^1(\Tn)$ and let $u$ be an $L^1$-local minimizer of $\E_{\mu,\sigma}$. Let $A=\{x\colon u(x)=1\}$, and let $\zeta\in C^\infty(\pt A)$ such that $\int_{\pt A} \zeta(x)\,d\Hn(x)=0$. 

To compute the first variation of the energy $\E_{\mu,\sigma}$, we view it as a set functional given by \eqref{eqn:set_energy} and proceed as in \cite{CS2,SZ1}.

Let
	\[
		X(x)=\zeta(x)\nu(x)\quad\text{on}\quad\pt A
	\]
where $\nu$ denotes the outer unit normal to $\pt A$. Then, clearly, 	
	\beqn\label{eqn:Xmass}
		\int_{\pt A} X\cdot\nu\,d\Hn(x)=0.
	\eeqn
Let $\Psi:\Tn\times(-\tau,\tau)\arrow\Tn$ solve
	\beqn\label{eqn:Psiflow}
		\begin{cases}\frac{\pt\Psi}{\pt t}= X(\Psi) \\	\Psi(x,0)=x,\end{cases}
	\eeqn
for some $\tau>0$. Define
	\[
		A_t := \Psi(A,t).
	\]
	
Invoking Proposition \ref{prop:reg}, we easily see that for the family of sets $\{A_t\}_{t\in(-\tau,\tau)}$ for some $\tau>0$ we have that
	\begin{gather}
		\pt A_t\text{ is of class }C^1,\text{ and}\nonumber\\
		\chi_{A_t}\arrow\chi_A\quad\text{as }t\arrow 0\text{ in }L^1(\Tn).\nonumber
	\end{gather}

Moreover, 
	\beqn\label{eqn:Psiexpan}
		D\Psi(\cdot,t) = I + t\,\nabla X + \frac{1}{2}t^2\,\nabla Z + o(t^2),
	\eeqn
where $Z:=\pt^2\Psi/\pt t^2 |_{t=0}$ is given with $i$-th component $Z^{(i)}=\sum_{j=1}^n X_{x_j}^{(i)} X^{(j)}$, and
	\beqn\label{eqn:Jacobatzero}
		\frac{\pt}{\pt t}\Big|_{t=0} J\Psi = \text{trace}\nabla X = \dive X,
	\eeqn
where $J\Psi$ denotes the Jacobian of $\Psi$. Hence, using \eqref{eqn:Xmass} and the Divergence Theorem, we get that
	\[
		\frac{d}{dt}\Big|_{t=0}|A_t| = \frac{d}{dt}\Big|_{t=0} \int_A J\Psi\,dx = 0,
	\]
i.e., the family of sets $\{A_t\}_{t\in(-\tau,\tau)}$ preserves the volume of $A$ to first order. Therefore this family of sets is an admissible class of perturbations of $A$ to compute the first variation of $\E_{\mu,\sigma}(A)$.

Define the functions $U(x,t)$ by
	\beqn\label{eqn:Ut}
		U(x,t)=\begin{cases} 1 &\mbox{if } x \in A_t, \\ 
-1 & \mbox{if } x \in A_t^c, \end{cases}
	\eeqn
and note that a function $u\in BV(\Tn,\{\pm1\})$ is said to be a \emph{critical point} of the energy $\E_{\mu,\sigma}$ if $d/dt|_{t=0} \E_{\mu,\sigma}(U(\cdot,t))=0$ for every $U(x,t)$ defined via the admissible family $\{A_t\}$.

\medskip

Consider the energy
	\beqn
		\begin{aligned}
		\E_{\mu,\sigma}(U(\cdot,t))&=\frac{1}{2}\int_{\Tn} |\nabla U(\cdot,t)| + \sigma \int_{\Tn} (U(x,t)-1)^2\rho(x)\,dx \\
									&=: P(t) + \sigma K(t).
		\end{aligned}
		\nonumber
	\eeqn
In \cite{CS2}, the authors show that
	\beqn\label{eqn:first_var_per}
		P^\pr(0) = (n-1)\int_{\pt A} H(x)\zeta(x)\,d\Hn(x)
	\eeqn
where $H$ denotes the mean curvature of $\pt A$. Now we are going to compute $K^\pr(0)$. Note that, by \eqref{eqn:Psiflow},
	\beqn
		\begin{aligned}
		K(t) &= 4\int_{A_t^c}\rho(x)\,dx \\
				 &= 4\int_{\Tn}\rho(x)\,dx-4\int_{A_t}\rho(x)\,dx \\
				 & = 4\int_{\Tn}\rho(x)\,dx-4\int_{A}\rho(\Psi(x,t))J\Psi(x,t)\,dx
		\end{aligned}
		\nonumber
	\eeqn
Therefore, since $\rho\in C^1(\Tn)$,
	\beqn\label{eqn:Kprime}
		K^\pr(t)=-4\int_A \nabla\rho(\Psi(x,t))\frac{\pt}{\pt t}\Psi(x,t)J\Psi(x,t) + f(\Psi(x,t))\frac{\pt}{\pt t}(J\Psi(x,t))\,dx.
	\eeqn
Hence, by \eqref{eqn:Psiflow}, \eqref{eqn:Psiexpan} and \eqref{eqn:Jacobatzero}, using the Divergence Theorem we get that
	\beqn
		\begin{aligned}
		K^\pr(0) &= -4\int_A  \nabla\rho(x)\cdot X(x) + \rho(x)\dive X(x)\,dx \\
						 &= -4 \int_A \dive(\rho(x)X(x))\,dx \\
						 &= -4 \int_{\pt A} \rho(x)(X(x)\cdot\nu(x))\,d\Hn(x).
	 	\end{aligned}
	 	\nonumber
	\eeqn
Combining this with \eqref{eqn:first_var_per} we get that
	\beqn\label{eqn:weakfirst}
		\int_{\pt A} \big[(n-1)H(x)-4\sigma \rho(x)\big]\zeta(x)\,d\Hn(x) = 0,
	\eeqn
i.e., there exists a constant $\lambda$ such that
	\beqn\label{eqn:firstvar2}
		(n-1)H(x) - 4\sigma \rho(x) = \lambda
	\eeqn
for all $x\in\pt A$.
\end{proof}

\bigskip

\begin{remark}\label{rem:piecewise}
Proposition \ref{prop:firstsecondvar} holds locally in case $\rho$ is piecewise $C^1$.  That is, if we assume $\rho$ is $C^1$ except on a smooth submanifold (on which it or its derivative is allowed to jump), the curvature condition \eqref{eqn:firstvar} holds at regular points of $\rho$.  This observation follows by noting that the weak form \eqref{eqn:weakfirst} continues to hold for $\zeta$ supported in each component of the set of regular points of $\rho$, and that by appropriate choices of $\zeta$ we may conclude that the Lagrange multiplier $\lambda$ is independent of the component.
\end{remark}

\begin{remark}
Since the boundary of $A=\{x\colon u(x)=1\}$ of an $L^1$-local minimizer $u$ of the energy $\E_{\mu,\sigma}$ is of class $C^{1,\alpha}$ by Proposition \ref{prop:reg}, we can express the reduced boundary $\pt^* A$ locally as the graph of a $C^{1,\alpha}$ function $\varphi$ on a ball $B\subset\mathbb{T}^{n-1}$. Then, the first variation \eqref{eqn:firstvar2} implies that
	\[
		(n-1)\,\nabla\cdot\left(\frac{\nabla\varphi(x^\pr)}{\sqrt{1+|\nabla\varphi(x^\pr)|^2}}\right) = 4\sigma\rho(x^\pr,\varphi(x^\pr)) + \lambda \quad\text{ for }x^\pr\in B.
	\]
As the right-hand side is of class $C^1$, by standard elliptic regularity we obtain that $\varphi\in C^{3,\alpha}$. Hence, the boundary of $A=\{x\colon u(x)=1\}$ is of class $C^{3,\alpha}$ for some $\alpha>0$.
\end{remark}

\begin{remark}
Note that the condition \eqref{eqn:firstvar} is a sufficient condition for $u$ to be a critical point of the energy $\E_{\mu,\sigma}$ with respect to $L^1$-perturbations.
\end{remark}

\begin{remark}[Second Variation]
If we further assume that $\rho\in C^2(\Tn)$, then a necessary condition for $L^1$-local minimimality of $u$ is given via the second variation of the energy $\E_{\mu,\sigma}$ around the critical point. Namely,
		\beqn\label{eqn:secondvar}
			\int_{\pt A} \left( |\nabla_{\pt A}\zeta|^2-\|B_{\pt A}\|^2\zeta^2\right)\,d\Hn - 4\sigma\int_{\pt A} \left(\nabla \rho \cdot \nu \right)\zeta^2\,d\Hn(x) \geq 0
		\eeqn
for any smooth $\zeta:\pt A\arrow \R$ satisfying $\int_{\pt A}\zeta\,d\Hn=0$. Here $\nabla_{\pt A}\zeta$ denotes the gradient of $\zeta$ relative to the manifold $\pt A$, $B_{\pt A}$ denotes the second fundamental form of $\pt A$ and $\nu$ denotes the unit normal to $\pt A$ pointing out of $A$. The computation of \eqref{eqn:secondvar} follows by adapting the calculations in \cite[Theorem 2.6]{CS2}. 

In the absence of nanoparticles (when $\sigma=0$) an important result regarding the local minimizers of the nonlocal isoperimetric problem related to the energy \eqref{eqn:limit_energy_gamma} is given in \cite{AcFuMo13}. Here the authors prove that strict stability in the sense of positive definite second variation of critical sets is a sufficient condition of isolated local minimality with respect to the $L^1$-topology. We believe that the techniques introduced in \cite{AcFuMo13} can be adapted for the functional $\E_{\mu,\sigma}$ to conclude that strict positivity of \eqref{eqn:secondvar} implies local minimality in $L^1$.
\end{remark}

\section{An Example in Two Dimensions}\label{sec:T2}

Depending on the distribution of nanoparticles and the strength of penalization via $\sigma$ one can modify the phase morphology of block copolymers and effectively prescribe the location and shape of the phase transitions.  Even with a given measure $\mu\in\Prb_{\text{ac}}(\Tn)$ describing the particle distribution there are many possible critical patterns for the energy $\E_{\mu,\sigma}$, depending on the strength coefficient $\sigma>0$. Indeed, the penalization term can act as an \emph{attractive} or \emph{repulsive} term depending on the choice of $\mu$ via its pinning-like quality. The rigidity of the results in Propositions \ref{prop:reg} and \ref{prop:firstsecondvar}, on the other hand, limits the possibilities for critical and minimizing patterns. In this section we will provide such an example in two dimensions, i.e., on the 2-flat torus $\Ttwo$. Exploiting these rigidities, the example below shows that for a certain choice of $\mu$ and when the mass constraint $m$ is restricted to a certain range, for any $\sigma>0$ the global minimizer of the energy $\E_{\mu,\sigma}$ is geometrically quite different than the solution of the isoperimetric problem, i.e., when $\sigma=0$.

\bexmpl\label{exmpl:diskpen}
For $\Ttwo=[-1/2,1,2)\times[-1/2,1/2)$ with periodic boundary conditions, any $m\in [0,1-2/\pi)$, and any fixed $r>\sqrt{(2\pi)^{-1}(1+m)}$ let $\mu\in\Prb_{\text{ac}}(\Ttwo)$ be defined via the density function
	\[
		\rho(x):=\frac{1}{\pi r^2}\, \chi_{B(0,r)}(x)
	\]
for $x\in\Ttwo$.  

By the direct method in the calculus of variations, there exists a global minimizer of $\E_{\mu,\sigma}$ for any $\sigma>0$ and for the measure $\mu$ defined as above.  Let also $A$ denote the set $\{x\in\Ttwo\colon u_0(x)=1\}$. Since $\int_{\Ttwo}u_0(x)\,dx=m$, we have that $|A|=(1+m)/2$ and $|A^c|=(1-m)/2$.
The problem thus reduces to find a set $A\subset \Ttwo$ with area $|A|=(1+m)/2$ which minimizes
	\[
  		\E_{\mu,\sigma}(A) = \Per_{\Ttwo}(A) + 4\sigma\left( 1- {|A\cap B(0,r)|\over \pi r^2}  \right).
	\]
That is, $A$ should have as small a perimeter as possible, while maximizing its intersection with the nanoparticle domain $B(0,r)$.  We note that for $m\in[0,1-2/\pi)$, $(1-m)/2<(1+m)/2$ and by the choice of $r$ as above, we have that $|B(0,r)|>(1+m)/2$; hence, $A \cap B(0,r) \neq \emptyset$ for any admissible set $A$.

When $\sigma=0$ the material is nanoparticle-free, and the energy reduces to the classical isoperimetric problem on $\Ttwo$.  For our choice of mass $m\in[0,1-2/\pi)$, the unique minimizing configuration with $\sigma=0$ (up to translation) is the single striped lamellar pattern
	\[
		u_L(x_1,x_2)=\begin{cases}
						1  &\;\, \text{if } x_1 \in     \left(\frac{-1-m}{4},\frac{1+m}{4}\right), \\
						-1 &\;\, \text{if } x_1 \not\in \left(\frac{-1-m}{4},\frac{1+m}{4}\right),
					 \end{cases}
	\]
with associated set $A_L:=\{ x_1\in \left(\frac{-1-m}{4},\frac{1+m}{4}\right)\}$.
(See Figure \ref{fig:frst} in the Introduction.)  As noted above, by the choice of radius $r$, any translate of $A_L$ must intersect the nanoparticle site $B(0,r)$; as we will see below, this will imply that the lamellar pattern {\em cannot} be the energy minimizer for any $\sigma>0$, and in fact it will no longer be critical for the energy $\E_{\mu,\sigma}$.

\medskip

The shape of the minimizer is constrained by the curvature equations \eqref{eqn:firstvar} which are satisfied by any critical configuration.  Indeed, since the penalization density $\rho$ is uniformly bounded, by Proposition \ref{prop:reg}, we conclude that $\pt A$ is of class $C^{1,\alpha}$ for some $\alpha>0$. On the other hand, as $\rho$ is constant on each of $\inte(A\cap B(0,r))$ and $\inte(A\cap B^c(0,r))$, where $\inte$ denotes the interior of these sets, it is trivially differentiable, and the formula \eqref{eqn:firstvar} is locally valid (see Remark~\ref{rem:piecewise}.) Thus
	\beqn\label{eqn:curv_2D}	
		\begin{gathered}
			H(x)=\lambda \quad\text{for } x\in\pt A\cap \inte(B^c(0,r)),\text{ and} \\
	    	H(x)=\frac{2\sigma}{\pi r^2}+\lambda\quad \text{for } x\in\pt A\cap \inte(B(0,r))	
		\end{gathered}
	\eeqn
for some constant $\lambda$.  
We note that $H(x)$ denotes the signed curvature, and it is piecewise constant.  In particular, when $H>0$ the domain $A$ lies inside a circle of radius $1/H$, while for $H<0$, $A$ is exterior to a circle of radius $1/|H|$.

We may immediately confirm the claim made above, that the lamellar configurations, consisting of translations of $u_L$, cannot be minimizers (or even critical) for any $\sigma>0$.  Indeed, by our choices of parameters $m,r$, any translation of $A_L$ intersects $\inte B(0,r)$, so the curvature condition is violated in the intersection.  

Another observation which follows directly from the curvature conditions \eqref{eqn:curv_2D} is that any ball $A_R=B(p,R)$, with $R:=\sqrt{(2\pi)^{-1}(1+m)}$ and $p\in\Ttwo$ such that $A_R\subset B(0,r)$, is stationary for $\E_{\mu,\sigma}$.  (See Figure \ref{fig:thrd} below.)  This configuration is a local minimizer for any $\sigma\geq 0$, and in fact it is the global minimizer for all sufficiently large $\sigma$:

\begin{proposition}\label{prop:large}
There exists $\sigma_0=\sigma_0(m,r)$ such that for all $\sigma>\sigma_0$, $A_R$ (defined above) is a global minimizer of $\E_{\mu,\sigma}$.
\end{proposition}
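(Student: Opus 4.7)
The key observation is that the energy difference between any admissible $A$ and $A_R$ can be written as
\begin{equation*}
\E_{\mu,\sigma}(A) - \E_{\mu,\sigma}(A_R) = \bigl(\Per(A) - 2\pi R\bigr) + \frac{4\sigma}{\pi r^2}\,|A\setminus B(0,r)|,
\end{equation*}
so the proposition reduces to showing that the right-hand side is non-negative for every admissible $A$ once $\sigma$ is sufficiently large.  The subcase $A\subset \ol{B(0,r)}$ is immediate: the penalization term vanishes, and since $\ol{B(0,r)}$ sits inside a fundamental domain of $\Ttwo$, lifting $A$ to $\R^2$ and applying the classical Euclidean isoperimetric inequality gives $\Per(A)\geq 2\sqrt{\pi|A|}=2\pi R$, with equality exactly when $A$ is a Euclidean disk $B(p,R)\subset \ol{B(0,r)}$.

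For the general situation I proceed by contradiction.  Suppose there exist $\sigma_n\to\infty$ and admissible $A_n$ with $\E_{\mu,\sigma_n}(A_n)<\E_{\mu,\sigma_n}(A_R)$.  The energy inequality forces $\Per(A_n)\leq 2\pi R$ and $\delta_n := |A_n\setminus B(0,r)| \leq \pi^2 r^2 R/(2\sigma_n)\to 0$.  By $BV$-compactness (Remark~\ref{rem:comp_min}), a subsequence converges in $L^1(\Ttwo)$ to some $A_\infty$ with $|A_\infty|=\pi R^2$, $A_\infty\subset \ol{B(0,r)}$, and $\Per(A_\infty)\leq 2\pi R$ by lower semicontinuity.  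The special case above then identifies $A_\infty$ as a Euclidean disk $B(p,R)$ with $|p|\leq r-R$, and moreover $\Per(A_n)\to 2\pi R$.

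The final step is a rigidity argument based on the criticality of $A_n$.  Since $A_n$ minimizes $\E_{\mu,\sigma_n}$, Propositions~\ref{prop:reg} and~\ref{prop:firstsecondvar} (together with Remark~\ref{rem:piecewise}) give that $\pt^* A_n$ is of class $C^{1,\alpha}$ and has piecewise constant mean curvature, namely $\lambda_n+4\sigma_n/(\pi r^2)$ on $\pt^* A_n\cap \inte B(0,r)$ and $\lambda_n$ on $\pt^* A_n\cap \inte B(0,r)^c$.  The almost-minimality estimate~\eqref{eqn:excess} combined with standard regularity theory for almost-minimizers of perimeter upgrades the $L^1$ convergence $A_n\to A_\infty$ to Hausdorff convergence of the boundaries, $\pt A_n\to \pt B(p,R)$.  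In the non-degenerate case $|p|<r-R$ the limit boundary lies at positive distance from $\pt B(0,r)$, so $\pt A_n \subset \inte B(0,r)$ for $n$ large; hence $\pt^* A_n$ is a $C^{1,\alpha}$ closed curve of globally constant mean curvature, which is necessarily a circle, and the volume constraint $|A_n|=\pi R^2$ forces $A_n$ to be a translate of $A_R$.  This gives $\E_{\mu,\sigma_n}(A_n)=\E_{\mu,\sigma_n}(A_R)$, contradicting the strict inequality.

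The main obstacle is the degenerate tangential case $|p|=r-R$, in which $\pt B(p,R)$ touches $\pt B(0,r)$ and Hausdorff convergence alone does not guarantee that $\pt A_n$ stays inside $B(0,r)$.  This can be handled via uniform density estimates for almost-minimizers of perimeter (to rule out that $\pt A_n$ crosses $\pt B(0,r)$ when $\delta_n$ is small), or by passing to a further subsequence after a suitable translation that puts the limit disk strictly interior to $B(0,r)$.
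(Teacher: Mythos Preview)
Your approach via compactness and contradiction is quite different from the paper's direct quantitative argument, and two steps in it are not fully justified.

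First, the claimed Hausdorff convergence of $\partial A_n$ to $\partial B(p,R)$.  You invoke the almost-minimality estimate \eqref{eqn:excess}, but if you trace through the proof of Proposition~\ref{prop:reg} the constant $C$ there scales linearly with $\sigma$ (via the term $4\sigma\|\rho\|_{L^\infty}$ in \eqref{eqn:secondest}).  Since $\sigma_n\to\infty$, the almost-minimality constants for the $A_n$ are \emph{not} uniform, and the density estimates underlying Hausdorff convergence hold only at scales shrinking to zero.  Thus $L^1$ convergence together with $\Per(A_n)\to 2\pi R$ does not upgrade for free to boundary convergence; indeed the inner arcs of $\partial A_n$ may have radius of order $1/\sigma_n$, so small-scale features are not ruled out by the standard theory.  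Some additional argument exploiting the specific arc structure and minimality would be needed here.

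Second, the degenerate case $|p|=r-R$ is explicitly left open.  Neither suggested fix works as stated: the uniform density estimates fail for the reason above, and the translation trick is unclear because the penalization term is not translation-invariant, so translating $A_n$ changes its energy and destroys the assumed strict inequality.

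The paper bypasses both issues entirely.  It never passes to a limit: instead it uses Proposition~\ref{prop:contract} to conclude that a minimizer $A$ which is not a disk must be non-contractible and hence must cross $\partial B(0,r)$, and then Lemma~\ref{lem:R1} gives the lower bound $R_1>\tfrac12-r$ on the outer arc radius.  Plugging this into the curvature relation \eqref{eqn:radii} forces the inner radius $R_2$ to be small once $\sigma$ is large, which confines $A\cap B(0,r)$ to a thin annulus near $\partial B(0,r)$ and yields an explicit upper bound $|A\cap B(0,r)|<2\pi r^2\beta$ with $\beta=R^2/(4r^2)$.  A direct comparison of $\E_{\mu,\sigma}(A)$ with $\E_{\mu,\sigma}(A_R)$ then finishes the proof with a computable threshold $\sigma_0(m,r)$.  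Your energy-difference identity is a clean starting point, but the paper's route through the explicit geometry of the curvature conditions is what actually closes the argument.
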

We defer the proof of Proposition~\ref{prop:large} to the end of the section.

The question is then what is the geometry of minimizers for small positive values of $\sigma$. As the energy depends continuously on the parameter $\sigma>0$, for small values of $\sigma>0$ we expect that the global minimizer of the energy $\E_{\mu,\sigma}$ is $L^1$-close to a lamellar pattern when $0\leq m < 1-2/\pi$ (Figure \ref{fig:scnd}).  Below we propose a possible geometry for minimizers for small $\sigma$.  Although we cannot describe them completely, a minimizer which is not a disk must be ``stripe-like'' in the sense that it must exploit the topology of  $\Ttwo$:  

\bprop\label{prop:contract}  Let $m\in [0, 1-2/\pi]$ and $r>\sqrt{(2\pi)^{-1}(1+m)}$, and $A\subset\Ttwo$ corresponding to a minimizer of $\E_{\mu,\sigma}$.  Then:
\begin{enumerate}
\item If $A$ is contractible in $\Ttwo$, then $A$ is a ball of radius $R=\sqrt{(2\pi)^{-1}(1+m)}$ with $A\subset B(0,r)$.
\item $A^c$ cannot be contractible in $\Ttwo$.
\end{enumerate}
\eprop

\begin{proof}
First assume $A\subset\Ttwo$ is contractible.  We lift $\Ttwo$ to $\R^2$, its universal cover.  Contractibility in the torus implies that the lifting of $A$ consists of a periodic array of disjoint compact components $\tilde A\subset\R^2$, each with area $|\tilde A|=(1+m)/2$.  By the classical isoperimetric inequality, each component has perimeter $\Per_{\Ttwo}(\partial\tilde A)\geq \Per_{\Ttwo}(B_R)$, with equality if and only if the components are disks of radius $R$.  By placing a periodic array of disks of radius $R$ inside the array of translates of the nanoparticle site $B(0,r)$, we obtain a configuration which has smaller perimeter and which optimizes the penalization term, and thus has smaller energy than $A$, unless $A$ were also a disk of radius $R$ contained in $B(0,r)$.  Thus (i) is verified.

The case of $A^c\subset\Ttwo$ contractible is similar.  Since $A^c$ lifts to a periodic array of compact sets in $\R^2$, and $\Per_{\Ttwo}(\partial A^c)=\Per_{\Ttwo}(\partial A)$, we may conclude that a disk of radius $R$ again has smaller perimeter than $A$.  By locating the disk inside $B(0,r)$ the penalization term in $\E_{\mu,\sigma}$ is optimized, so again the disk has strictly smaller energy than any domain with $A^c$ contractible.  This proves (ii).
\end{proof}

\medskip

Using Proposition~\ref{prop:contract} and the curvature condition \eqref{eqn:curv_2D} we may illustrate some configurations which are candidates for the minimizer, and eliminate certain others.  Supposing that the minimizer is {\em not} a disk inside $B(0,r)$, we may assume that both $A$ and $A^c$ are not contractible, and hence each intersects both $B(0,r)$ and $B^c(0,r)$.   By the criticality conditions \eqref{eqn:curv_2D} we see that $\pt A$ has to be a union of arcs of circles and straight lines as its connected components have constant curvature in two dimensions. Also, note that the curvature of $\pt A$ inside the ball $B(0,r)$ has to be greater than the curvature of $\pt A$ on $B^c(0,r)$. Thus, $\pt A$ does not consist of a union of straight lines inside $B(0,r)$ and arcs of circles outside of $B(0,r)$.

Since $\pt A$ is of class $C^{1,\alpha}$ constant curvature components of $\pt A$ meet tangentially on $\pt B(0,r)$. Therefore, $\pt A$ can not consist of a union of an arc of a circle inside $B(0,r)$ connecting to another arc of a positively curved circle outside of $B(0,r)$, since two points and tangents at those points determine a circle uniquely, and two circles with different positive curvatures cannot meet at two points tangentially. Therefore the Lagrange multiplier $\lambda$ in \eqref{eqn:curv_2D} cannot be strictly positive since for $\lambda>0$ the components of $\pt A$ would consist of positively curved arcs of circles which is not possible. Therefore we may assume that either $\lambda=0$ or $\lambda<0$.

\medskip

\noindent {\bf Band aid patterns.} \ Suppose first that the Lagrange multiplier $\lambda=0$ in \eqref{eqn:curv_2D}. In this case the domain consists of arcs of circles inside $B(0,r)$ and straight lines outside of $B(0,r)$. Note that, by periodicity of the domain, the straight components of $\pt A$ in $B^c(0,r)$ must be parallel, and hence they must meet $\pt B(0,r)$ at semicircles inside $B(0,r)$. Such patterns we will refer to as \emph{band aid} patterns (see Figure \ref{fig:bandaids}). 

\begin{figure}[ht!]
     \begin{center}

        \subfigure[{\tiny One band aid}]{
            \label{fig:first}
            \includegraphics[width=0.25\linewidth]{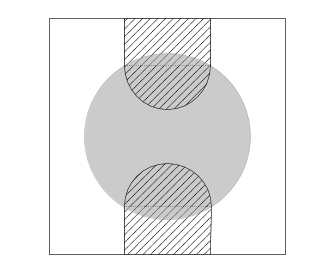}
        }\quad\
        \subfigure[{\tiny Two band aids}]{
           \label{fig:second}
           \includegraphics[width=0.25\linewidth]{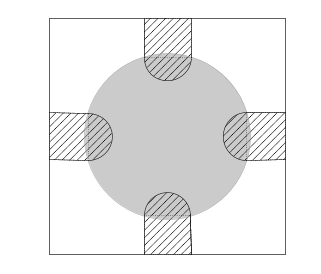}
        }\quad\ 
        \subfigure[{\tiny Slant band aid}]{
            \label{fig:third}
            \includegraphics[width=0.25\linewidth]{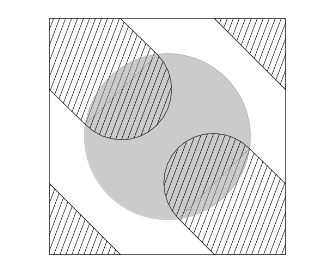}
        }

    \end{center}
    \caption{
        \emph{Band aid} patterns are stationary, but are {\bf not} global minimizers for $m\in[0,1-2/\pi)$ for any $\sigma>0$. Here again the gray disk depicts the penalization region $B(0,r)$.
     }
   \label{fig:bandaids}
\end{figure}
By adjusting the radii of the semicircles, we may match the area constraint and so these do represent stationary points of the energy $\E_{\mu,\sigma}$.
However, as the bandaid patterns are all contractible in $\Ttwo$, by Proposition~\ref{prop:contract} they can not be minimizers for any $\sigma>0$, and thus $\lambda=0$ is not achievable for a minimizer.

\bigskip

\noindent {\bf Concave/convex strips.} Suppose the Lagrange multiplier $\lambda<0$ in \eqref{eqn:curv_2D}.  Then $A$ lies inside of arcs of circles of radius $R_2$ inside of $B(0,r)$, and outside of circular arcs with radius $R_1$ outside of $B(0,r)$.  (See figure~\ref{fig:almoststripe}.)  Moreover, the curvature condition \eqref{eqn:curv_2D} relates the radii to the parameter $\sigma$ via
\begin{equation}\label{eqn:radii}
\frac{2\sigma}{\pi r^2} = \frac{1}{R_1} + \frac{1}{R_2}.
\end{equation}

	\begin{figure}[ht!]
		\begin{center}
			\includegraphics[height=5.5cm]{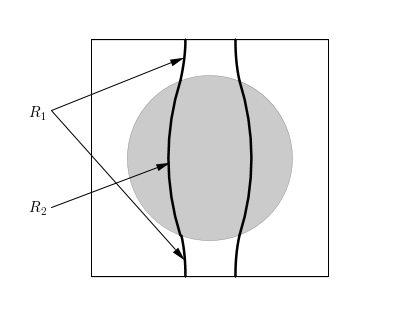}
		\end{center}
		\caption{
        The set $A$ is enclosed by arcs of circles where $\pt A$ is negatively curved outside of $B(0,r)$ and positively curved inside $B(0,r)$. The penalization region $B(0,r)$ is highlighted by the gray disk.
     }
   		\label{fig:almoststripe}
	\end{figure}

\medskip

\begin{lemma}\label{lem:R1}
 If $A$ is a minimizer, then $R_1>\frac12 -r$. 
 \end{lemma}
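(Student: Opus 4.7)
The plan is a chord-length argument combined with Proposition~\ref{prop:reg}. First I would apply Proposition~\ref{prop:contract}: in the concave/convex strip case we have $\lambda<0$, so $A$ is not the disk inside $B(0,r)$ produced in Proposition~\ref{prop:large}; hence neither $A$ nor $A^c$ is contractible in $\Ttwo$. In particular $A$ wraps around the torus in some direction, and after a rotation/reflection of coordinates I may assume this direction corresponds to the primitive lattice vector $\mathbf{e}=(1,0)$.

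Next, I would lift to the universal cover $\R^2$ and select one arc $\gamma\subset \pt A\cap\ol{B^c(0,r)}$ that realizes the wrapping: a lift of $\gamma$ connects a point $P_1\in\pt B(0,r)$ to $P_2+\mathbf{e}$ with $P_2\in\pt B(0,r)$, lying outside every periodic translate of $B(0,r)$. Since $\gamma$ lies on a circle of radius $R_1$, its chord satisfies $|P_1-(P_2+\mathbf{e})|\leq 2R_1$; and the reverse triangle inequality, together with $|P_1|,|P_2|\leq r$, gives
\[
|P_1-(P_2+\mathbf{e})|\geq |\mathbf{e}|-|P_1-P_2|\geq 1-2r.
\]
Combining the two estimates immediately yields the non-strict bound $R_1\geq \tfrac{1}{2}-r$.

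The main obstacle — the step I expect to require the most care — is upgrading this to a strict inequality. My plan is to rule out the equality case $R_1=\tfrac{1}{2}-r$ by showing it is geometrically degenerate. Saturation of the chord bound forces $\gamma$ to be a full semicircle, while saturation of the triangle inequality forces $P_1-P_2$ to be parallel to $\mathbf{e}$ with $|P_1-P_2|=2r$; together these pin $P_1=(r,0)$ and $P_2=(-r,0)$. Running the same analysis on the other $R_1$-arc of the strip (which must also wrap around the torus), it too is forced to be a semicircle with the identical pair of endpoints. Consequently two distinct arcs of $\pt A$ meet at the single point $(r,0)\in\Ttwo$ (and, symmetrically, at $(-r,0)$), producing a branch point of $\pt A$. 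This contradicts Proposition~\ref{prop:reg}: in dimension $n=2$ the singular set $\pt A\setminus\pt^*A$ has Hausdorff dimension at most $n-8=-6$ and is therefore empty, so $\pt A=\pt^*A$ is a $C^{1,\alpha}$ manifold everywhere. Hence $R_1>\tfrac{1}{2}-r$.
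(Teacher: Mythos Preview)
Your chord-length approach differs from the paper's. The paper argues by contradiction directly through $A^c$: assuming $R_1\leq\tfrac12-r$, each connected component of $A^c\setminus B(0,r)$ lies on the \emph{inside} of a circle of radius $R_1$ (since $\lambda<0$ means $A$ lies on the outside of these arcs), and the diameter $2R_1\leq 1-2r$ is at most the gap between adjacent lattice translates of $B(0,r)$. Hence every lifted component of $A^c$ is compact, so $A^c$ is contractible in $\Ttwo$, contradicting Proposition~\ref{prop:contract}(ii). This delivers the strict inequality in one stroke, with no separate equality analysis and no need to locate a specific boundary arc.

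Your route can be made to work, but there is a genuine gap at the step where you ``select one arc $\gamma$ that realizes the wrapping.'' Knowing that $A$ (or even that both $A$ and $A^c$) is non-contractible does not by itself produce an $R_1$-arc of $\partial A$ joining \emph{distinct} lattice translates of $\partial B(0,r)$. What is missing: first argue that some component of the $C^{1,\alpha}$ curve $\partial A$ is itself a non-contractible simple closed curve (this does follow once both $A$ and $A^c$ are non-contractible, but it needs a short topological argument---if every component bounded a disk, one of $A,A^c$ would be contractible); then observe that this component cannot lie entirely outside $B(0,r)$ (a closed embedded constant-curvature curve is a round circle, hence contractible), so its lift alternates between $R_2$-arcs and $R_1$-arcs; only then does periodicity force some $R_1$-arc to join two distinct translates. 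Two smaller issues: a rotation/reflection cannot send an arbitrary primitive vector of $\mathbb{Z}^2$ to $(1,0)$ while preserving both the lattice and the Euclidean metric---just keep a general $\mathbf{e}$ and use $|\mathbf{e}|\geq 1$; and your strict-inequality step presupposes a second wrapping arc (``the other $R_1$-arc of the strip''), which again needs justification via the null-homology of $\partial A$ rather than an assumed strip geometry.
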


\begin{proof}
To verify the lemma, assume instead that $R_1\leq \frac12-r$.  The connected components of $A^c\setminus B(0,r)$ are then either contained inside circles of radius $R_1$ which are disjoint from $B(0,r)$, or are bounded by disjoint arcs of this radius which connect to $B(0,r)$ at two points on $\partial B(0,r)$.  In the latter case, we note that by lifting to $\R^2$, the distance between adjacent images of the nanoparticle domain $B(0,r)$ is $2(\frac12-r)>2R_1$.  Thus, the images in $\R^2$ of the components of $A^c$ are compact, and hence $A^c$ is contractible in $\Ttwo$.
This contradicts Proposition~\ref{prop:contract}, so therefore the lemma must hold true. 
\end{proof}

\medskip

We may now present our guess for stripe-like minimizers when $\sigma>0$ but small.  We choose the inside radius $R_2$ so as to create a concave/convex stripe pattern as in Figure~\ref{fig:almoststripe}.  In order to do this, it is necessary that the inside radius $R_2>r$; otherwise, the circular arcs inside $B(0,r)$ may not connect across the nanoparticle zone, and a curvy band aid pattern would result (see figure~\ref{fig:curvybandaid}.)  As such a pattern is contractible in $\Ttwo$, it cannot be a minimizer.  The concave/convex stripe thus requires a lower bound on both $R_1,R_2$, and hence can only be realized for
$$  \sigma = {\pi r^2\over 2}\left[\frac{1}{R_1} + \frac{1}{R_2}\right] <  {\pi r\over 2(1-2r)}.  $$
The exact values of $R_1,R_2$ (and the centers of the constructing circles) will be also determined by the area constraint $|A|=(m+1)/2$ and the requirement that the resulting curve is $C^{1,\alpha}$.

\begin{remark} We conjecture that when $\sigma>{\pi r\over 2(1-2r)}$ then the minimizer must be a disk of radius $R$, inside the nanoparticle region $B(0,r)$.  The variety of concave/convex regions which may be drawn is great (and is not restricted to shapes depicted in Figures~\ref{fig:almoststripe} and \ref{fig:curvybandaid},) so the optimum value of $\sigma_0$ in Proposition~\ref{prop:large} remains an open question.  However, we observe that as $\sigma$ gets larger, the radii $R_2$ of arcs within $B(0,r)$ must get smaller in order to satisfy \eqref{eqn:radii} (given Lemma~\ref{lem:R1}), and hence the area contained in the nanoparticle region is eventually insufficient to reduce the penalization term in the energy. (See Figure~\ref{fig:curvybandaid}.)  This observation forms the basis for our proof of Proposition~\ref{prop:large}.
\end{remark}

	\begin{figure}[ht!]
		\begin{center}
		\subfigure[{\tiny Convex/concave pattern with {\bf contractible} boundary }]{\label{curvy1}
			\includegraphics[width=0.22\linewidth]{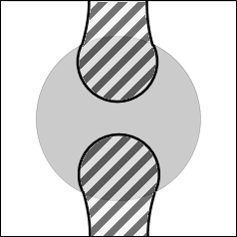}}\quad\quad\qquad\qquad
	    \subfigure[{\tiny Convex/concave pattern with {\bf uncontractible} boundary}]{\label{curvy2}
	        \includegraphics[width=0.22\linewidth]{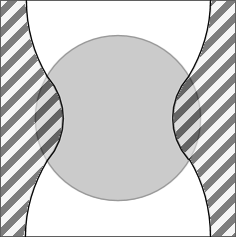}}
		\end{center}
		\caption{The sets $A$ shown are enclosed by arcs of circles where $\pt A$ is negatively curved outside of $B(0,r)$ and positively curved inside $B(0,r)$, with small radius $R_2<r$.  The left hand set yields a contractible region which cannot be a minimizer. The image on the right is unlikely to be a minimizer, as the penalization term will be large for $\sigma>0$.}
   		\label{fig:curvybandaid}
	\end{figure}

We conclude with the proof that the disk of radius $R$ gives the global minimizer for sufficiently large $\sigma$.
\begin{proof}[Proof of Proposition \ref{prop:large}.]
Let $A$ be the set associated to a global minimizer of $\E_{\mu,\sigma}$, and set $\beta:=R^2/4r^2<1/4$.  If $A$ is not a disk of radius $R$, then by Proposition~\ref{prop:contract} $A\cap B(0,r)^c\neq\emptyset$, and so $\partial A$ consists of arcs of circles of radius $R_1\ge 0$ outside $B(0,r)$, and of radius $R_2>0$ inside $B(0,r)$, satisfying \eqref{eqn:radii}.  
By Lemma~\ref{lem:R1}, we have
$$  {1\over R_2} > {2\sigma\over \pi r^2} - {2\over 1-2r}, $$
and thus there exists $\sigma_1=\sigma_1(r)$ so that for all $\sigma>\sigma_1$ we have
$R_2<\beta r/2$.

We now claim that  for all $\sigma>\sigma_1$, $A\cap B(0,r)$ lies inside a disjoint collection of circular arcs, each of which lies within distance $\beta r$ of $\partial B(0,r)$.  Indeed, $\partial A\cap B(0,r)$ consists of circular arcs of radius $R_2<\beta r/2$ (by the above estimate,) either connected to $\partial B(0,r)$ at the endpoints, or as disks of radius $R_2$ contained in the interior of $B(0,r)$.  
The arcs which contact $\partial B(0,r)$ lie within distance $\beta r$ of $\partial B(0,r)$ by the bound on $R_2$, so it remains to consider interior disks.  First,
assume that several such disks are contained in the interior of $B(0,r)$.  By the classical isoperimetric inequality, the perimeter of $A$ would be reduced by replacing these by a single disk with the same total area, with no change to the penalization term, and thus reducing the total energy. However, this contradicts the minimality of $A$, and thus there can only be a single disk of radius $R_2$ inside $B(0,r)$.  By translating this single disk to be tangent to $\partial B(0,r)$, the energy of $A$ remains the same, so we obtain a minimizer with all components of $A\cap B(0,r)$ within distance $\beta r$ of $\partial B(0,r)$, as claimed.

By the claim, $A\cap B(0,r)$ lies within an annular region $B(0,r)\setminus B(0,(1-\beta)r)$ of thickness $\beta r$.  In particular, 
$$|A\cap B(0,r)|< \pi (r^2 - [(1-\beta)r]^2) <2\pi r^2\beta. $$
We may then compare the energy of $A$ to that of the single disk $A_R\subset B(0,r)$ of radius $R=\sqrt{(2\pi)^{-1}(1+m)}$.  By the isoperimetric inequality on $\Ttwo$, $\Per_{\Ttwo}(A)\geq \Per_{\Ttwo}(A_L)= 2<\pi R^2$, given our choice of parameters.  Thus,
\begin{align*}
\E_{\mu,\sigma}(A) - \E_{\mu,\sigma}(A_R) & \geq
    2 - 2\pi R + 4\sigma\left( {R^2\over r^2} - {|A\cap B(0,r)|\over \pi r^2}\right) \\
   & \geq -\left[2\pi R -2\right] + 4\sigma\left( {R^2\over r^2} - 2\beta\right) \\
   & = -\left[2\pi R -2\right] + 2\sigma {R^2\over r^2} \\
   &>0,
\end{align*}
for all $\sigma>\max\{\sigma_1, {r^2\over R^2}(\pi R -1)\}:=\sigma_0(m,r)$.  Thus, for all $\sigma>\sigma_0$, the minimizer must be a disk contained inside $B(0,r)$.
\end{proof}
\medskip

\eexmpl

\medskip

\begin{remark}[Small translations] Note that unlike the function $u_L$, the function $u_R$ is unique only up to small translations, i.e., any translate $u_{R,a}$ of $u_R$ defined by
	\[
		u_{R,a}(x):=\begin{cases}
					1  &\;\, \text{if } x\in B(a,R), \\
							-1 &\;\, \text{if } x\not\in B(a,R),
				 \end{cases}
	\]
for any $a\in\Ttwo$ with $|a|<r-R$, we have that $\E_{\mu,\sigma}(u_{R,a})=\E_{\mu,\sigma}(u_R)$ if $R<r$. The energy $\E_{\mu,\sigma}$, though, is not translational invariant in general. This also reflects the ``pinning'' effect of the penalizing measure $\mu$.
\end{remark}

\begin{remark} We believe that the results of Example \ref{exmpl:diskpen} (Propositions \ref{prop:large} and \ref{prop:contract}) can be generalized easily to the case when the penalization measure is given by an indicator function $\rho$ satisfying (i) $\int_{\Tn}\rho(x)\,dx=1,$ (ii) $B(p,R) \subset\!\subset \supp\rho$ for some $p\in\Tn$ where $R=\sqrt{(2\pi)^{-1}(1+m)}$, and (iii) $|\supp\rho|>(1+m)/2$.
\end{remark}

\begin{remark}[The effect of $\sigma$] The effect of the penalization term in $\E_{\mu,\sigma}$ is more rigid than the effect of the nonlocal perturbation in $\E_{\mu,\sigma,\gamma}$ given by \eqref{eqn:limit_energy_gamma}. Indeed, in \cite{ST}, the authors show that on $\Ttwo$ the global minimizer of the nonlocal isoperimetric problem ($\E_{\mu,\sigma,\gamma}$ with $\sigma=0$), agrees with the global minimizer of the isoperimetric problem, i.e., is given by $u_L$, provided $\gamma>0$ is small. That is, the perimeter term dominates and the effect of the nonlocal perturbation via $\gamma>0$ does not ``kick-in'' immediately whereas the above example shows that this is not the case for $\sigma>0$.
\end{remark}

\section{Concluding Remarks}\label{sec:conc_rem}

As noted in the introduction and as the example in Section \ref{sec:T2} shows perhaps the most important feature of minimizing the energy $\E_{\mu,\sigma}$ is that compared to the isoperimetric problem the geometry of minimizing patterns can change significantly. Via its connection to the energy $\E_{\eps,\sigma,r,N}$ (Section \ref{sec:infin_part}), this reflects well the physical applications of adding nanoparticles into copolymer blends to change the morphology of pattern formation. Indeed, since the consideration of the energy \eqref{eqn:limit_energy} is to our knowledge the first mathematically rigorous study of nanoparticle/copolymer blends, this work also generates several directions for subjects of future studies. We will conclude by remarking on these directions.

 \begin{enumerate}[(1)]
 
	\item As mentioned before, depending on the choice of the penalizing measure $\mu$, the second term in $\E_{\mu,\sigma}$ can act as an \emph{attractive} or \emph{repulsive} term. For example, in two dimensions and small mass regime, by choosing the measure $\mu$ distributed on disjoint small disks one can force the minimizer of the energy $\E_{\mu,\sigma}$ to ``oscillate'' rather than forming a larger disk which would be preferable in terms of minimizing the perimeter term (see Figure \ref{fig:dots}).
	\begin{figure}[ht!]
		\begin{center}
			\includegraphics[height=4cm]{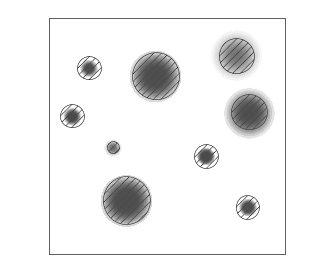}
		\end{center}
		\caption{
        A measure $\mu$ consisting of small disjoint supports (light gray blobs) might show a repulsive effect when a minimizer (striped regions) try to cover most of the support of $\mu$ to reduce cost.
     }
   		\label{fig:dots}
	\end{figure}
	
	\item Although the choice of the measure $\mu$ provides a substantial freedom in forcing the minimizers of $\E_{\mu,\sigma}$ to form desired patterns, the minimizing patterns still exhibit some rigidity. In particular, regularity properties (Proposition \ref{prop:reg}) and the criticality and stability conditions (Proposition \ref{prop:firstsecondvar}) limit this freedom (see also the example in Section \ref{sec:T2}). Adding the long-range interaction term between the phases controlled by $\gamma$ as in Remark \ref{rem:nonloc_pert} would enrich the possibilities for minimizing patterns. This would also provide further mathematical challenges in understanding the energy landscape of \eqref{eqn:limit_energy_gamma}.
	
	\item Here we chose to fix the location of nanoparticles, hence, their distribution given by the measure $\mu$ as the number of particles goes to infinity whereas their size approach zero. An interesting problem would be to analyze local and global minimizers of the energy $\E_{\mu,\sigma}$ not only with respect to the phases, i.e., over $u\in BV(\Tn;\{\pm 1\})$ with a fixed mass constraint $m$ and fixed measure $\mu$, but also over the measures $\mu\in\Prb_{\text{ac}}(\Tn)$.

 \end{enumerate}

\bigskip

\noindent {\bf Acknowledgements.} The authors were supported by NSERC (Canada) Discovery Grants. IT was also supported by a Field--Ontario Postdoctoral Fellowship. The authors would like to thank the anonymous reviewers for their comments.

\bibliographystyle{plain}
\bibliography{NanopartBib}

\end{document}